\newtheorem{theorem}{Theorem}[section]
\newtheorem{conjecture}[theorem]{Conjecture}
\newtheorem{ques}[theorem]{Question}
\newtheorem{claim}{Claim}
\newenvironment{proofc}{\begin{proof}[Proof of Claim]}{\end{proof}}
\newcommand{\G}{\mathcal{G}}
\newcommand{\totdom}{\bar{\gamma}}
\newcommand{\xee}{\mathcal{X}}
\newcommand{\yee}{\mathcal{Y}}
\newcommand{\tee}{\mathcal{T}}
\newcommand{\see} {\mathcal{S}}
\newcommand{\wee} {\mathcal{W}}
\newcommand{\mee}{\mathcal{M}}
\newcommand{\comp}{\textnormal{comp}}
\begin{document}

\title{A reduction of the ``cycles plus $K_4$'s'' problem}
\author{
Aseem Dalal\footnote{ Indian Institute of Technology Delhi, Department of Mathematics, Delhi India.
  Email: {\tt aseem.dalal@gmail.com}.}
\qquad
Jessica McDonald\footnote{Auburn University, Department of Mathematics and Statistics, Auburn U.S.A.
  Email: {\tt mcdonald@auburn.edu}.
	Supported in part by Simons Foundation Grant \#845698  }
 \qquad
Songling Shan\footnote{Auburn University, Department of Mathematics and Statistics, Auburn U.S.A.
  Email: {\tt szs0398@auburn.edu}.
	Supported in part by NSF grant
 	DMS-2345869.}
}

\date{}

\maketitle

\begin{abstract}
Let $H$ be a 2-regular graph and let
$G$ be obtained from $H$ by gluing in vertex-disjoint copies of $K_4$.   The ``cycles plus $K_4$'s'' problem is to show that $G$  is 4-colourable; this is a  special case of the \emph{Strong Colouring Conjecture}. In this paper we reduce the ``cycles plus $K_4$'s'' problem to a  specific 3-colourability problem. In the 3-colourability problem, vertex-disjoint triangles are glued (in a limited way) onto a disjoint union of triangles and  paths of length at most 12, and we ask for 3-colourability of the resulting graph.
\end{abstract}

\vspace*{.3in}

\section{Introduction}

In this paper all graphs are assumed to be simple, unless explicitly
stated otherwise. The reader is referred to \cite{WestText} for
standard terminology.

Given vertex-disjoint graphs
$G_1, \ldots, G_q$, $H$ with
$|\bigcup _{1\leq i\leq q} V(G_i)|\leq |V(H)|$, we \emph{glue
  $G_1, \ldots, G_q$ onto $H$} by defining an injective function
$f:\cup_{1\leq i\leq q} V(G_i)\rightarrow V(H)$, and then forming a
new graph $G$ with $V(G) = V(H)$ and $E(G) = E(H) \cup \bigcup_{1\leq i \leq q} E_i,$
where
$E_i  =\{f(a)f(b): ab\in E(G_i), f(a)f(b)\not\in E(H)\}.$
The graph $G$ is said to have been obtained from $H$ by \emph{gluing
  in} $G_1, \ldots, G_q$.  Consider the following question:

\begin{ques}\label{qtri} Suppose that $H$ is a graph with $\Delta(H)\leq 2$, and suppose that $G$ is obtained from $H$ by gluing on vertex-disjoint triangles. When is $\chi(G)\leq 3$?
\end{ques}

For Question \ref{qtri}, if $H$ contains a $C_4$, then certainly a $K_4$ may be created in $G$ which would make $\chi(G)\not\leq 3$. Having $C_4$'s in $H$ is not the only thing that could go wrong however: Fleischner and Stiebitz \cite{fs-remarks} found an infinite family of examples where $H$ does not contain any $C_4$ components, but where $\chi(G)\not\leq 3$, answering a question of Erd\H{o}s ~\cite{erdos-fav}. The smallest of Fleishner and Stiebitz's examples has $H=C_5\cup C_{10}$; other known graphs $H$ with $\Delta(H)=2$ but which can yield negative answers to Question \ref{qtri} (i.e. there is a way to glue on vertex-disjoint triangles that gives a 4-chromatic graph) include $H=C_3\cup C_6$ (\"{O}hman \cite{Ohman}) and $H=C_7\cup K_1\cup K_1$ (Sachs, see \cite{fs-remarks}); for much more, see \"{O}hman. On the other hand, Fleishner and Stiebitz \cite{FSct} (and later Sachs \cite{Sa}) famously provided a positive answer to Question \ref{qtri}: they proved that if $H$ is a single cycle then the $G$ obtained is always 3-colourable. This ``cycle plus triangles'' theorem answered another question of Erd\H{o}s, which also had origins in the work of Du, Hsu, and Hwang \cite{DHH} (see \cite{FSct} for more history).

If we hope for a positive answer to Question \ref{qtri} for some $H$ which has multiple components, it seems wise to avoid cycles of length at least four in $H$, and instead consider $H$ to be a disjoint union of paths and triangles. Here there are two extremes, both of which always yield positive answers to Question \ref{qtri}. If $H$ is a disjoint union of paths, then by joining all these paths into a cycle, we form a ``cycle plus triangles'' graph $G$ which is 3-colourable (by Felishner and Stiebitz \cite{FSct}). On the other hand if $H$ is a disjoint union of triangles, then we can form a 3-regular auxiliary bipartite graph which describes the interaction between triangles of $H$ and added triangles, and the 3-edge-colourability of this graph implies the 3-colourability of $G$. Things become more difficult when glued-in triangles join both path and triangle components in $H$. For $G, H$ as in Question \ref{qtri}, we say that in $G$, two triangles $T, T'$ in $H$ are \emph{path-linked} if there exists at least one glued-in triangle $Y$ with $T\cap Y, T'\cap Y\neq \emptyset$, and the third vertex of $Y$ is on some path in $G$ of length at least two. If such gluings are strictly limited -- so that in $G$, every $H$-triangle is path-linked to at most one other $H$-triangle, then we conjecture that $G$ is 3-colourable. In fact, we also restrict our path components in $H$ to length at most 12 to eliminate other potential problems; note that if all $H$-paths have length at most 2 then our auxiliary bipartite argument above again works to show 3-colourability.

\begin{conjecture}\label{Conj3col} Let $H$ be a graph which is a disjoint union of triangles and paths of length at most 12, and let $G$ be obtained from $H$ by gluing on vertex-disjoint triangles. Moreover, suppose that in $G$, every $H$-triangle is path-linked to at most one other $H$-triangle. Then $G$ is 3-colourable.
\end{conjecture}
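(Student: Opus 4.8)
The plan is to leverage the two solved extremes described in the introduction: the \emph{cycle plus triangles} theorem of Fleischner and Stiebitz \cite{FSct} for the path-only case, and the classical correspondence (via König's edge-colouring theorem) between proper $3$-vertex-colourings of a triangles-only instance and proper $3$-edge-colourings of an auxiliary $3$-regular bipartite graph. The overall strategy is to reduce an arbitrary instance to a controlled combination of these two: the path-linking hypothesis is precisely what keeps the triangle components from interacting too wildly, and the bound $12$ on path lengths is used both to keep certain local case analyses finite and to guarantee enough ``free'' vertices on paths for a gadget construction.

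\textbf{Normalising the instance.} First I would reduce to the case that $G$ is connected; that no glued-in triangle lies entirely within $E(H)$ (such a triangle adds no edges and may be discarded); and that every remaining glued-in triangle meets at least two distinct components of $H$. A glued-in triangle meeting only a single $H$-component $C$ is confined to the $G$-component containing $C$, and since $C$ is a triangle or a path of length at most $12$, that whole $G$-component is then a small graph which is easily seen to be $3$-colourable directly; so in the connected case we may assume every glued-in triangle is a ``connector''. I would also record the local picture at an $H$-triangle $T=\{a,b,c\}$: a connector $Y$ meeting $T$ in two vertices, say $a,b$, with third vertex $z\notin T$, forces $z$ to take the colour of $c$ in any proper $3$-colouring; this ``colour-identification'' is exploited below.

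\textbf{The auxiliary graph and the reduction.} Form a multigraph $\mathcal{A}$ whose vertices are the components of $H$ and whose edges record the connectors (a connector meeting $k\le 3$ distinct components contributes $k$ incidences). The path-linking hypothesis says exactly that each triangle-vertex of $\mathcal{A}$ is incident to at most one ``heavy'' connector, i.e.\ one joining it to another triangle-vertex via a path of length at least two. I would then build a single cycle by joining up the path components (the usual ``join the paths into one cycle'' trick), and \emph{resolve} the triangle components into it one at a time. For a triangle $T$ path-linked to no other $H$-triangle, I would open $T$ into this cycle, so that $a,b,c$ become three consecutive vertices of the cycle rather than a triangle; this loses only the constraint that $a,b,c$ be pairwise distinct, which I would restore by reintroducing a single fresh triangle on $\{a,b,c\}$ elsewhere --- or, when $a,b,c$ already lie on a common path with enough room, by routing the cycle so the constraint is recovered automatically --- using the length-$\le 12$ bound to locate the needed slack. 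A pair of triangles that \emph{are} path-linked (each has at most one such partner, so these come in disjoint pairs) is handled as a single gadget together with the connector(s) between them, again re-expressing the joint constraint as vertex-disjoint triangles along the cycle. The output is then a disjoint union of cycles, each with vertex-disjoint triangles glued in, to which \cite{FSct} applies componentwise; a proper $3$-colouring of it is translated back to $G$ using the colour-identifications from the normalisation step, together with the König correspondence on the portions arising from triangles-only sub-instances.

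\textbf{Main obstacle.} The crux --- and, I expect, the reason the statement is only conjectured --- is making the resolution in the previous paragraph globally consistent: keeping all reintroduced triangles vertex-disjoint from one another and from the glued-in triangles of $G$, keeping the final graph genuinely of ``cycles plus disjoint triangles'' type so that \cite{FSct} really applies, and ensuring the back-translation of colours does not conflict where the resolutions of neighbouring components overlap on a shared path. I would attack this with a discharging-style accounting on $\mathcal{A}$ that allocates the ``resources'' (free path-vertices) needed to resolve each component, arguing that the path-linking bound and the length bound together force a feasible allocation; verifying that such an allocation always exists, and that no two resolutions collide, is the substantial part of the work.
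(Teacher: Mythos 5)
The statement you are proving is Conjecture \ref{Conj3col}, which the paper itself leaves open: its main theorem only shows that this conjecture \emph{implies} the ``cycles plus $K_4$'s'' conjecture, so there is no proof in the paper to compare against. Your proposal is a strategy outline rather than a proof, and the gap is the one you yourself flag in your final paragraph: the entire argument rests on the claim that the ``resolution'' of triangle components into a single cycle can be carried out globally and consistently, and you give no argument for this beyond the intention to find one by discharging. Until that step is established, nothing has been proved.

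Beyond being incomplete, the resolution step as described appears not to produce an instance to which Fleischner--Stiebitz applies. When you open an $H$-triangle $T=\{a,b,c\}$ into the cycle and ``reintroduce a single fresh triangle on $\{a,b,c\}$,'' the vertices $a,b,c$ are in general already covered by glued-in connector triangles of $G$ (that is the typical situation, since connectors meet $T$ in one or two vertices). The resulting graph is therefore a cycle plus triangles in which some vertices lie in \emph{two} added triangles, which is outside the hypotheses of the cycle-plus-triangles theorem; the theorem requires the added triangles to be pairwise vertex-disjoint. The same objection applies to the ``gadget'' for a path-linked pair. Moreover, the known negative examples for Question \ref{qtri} (e.g.\ $H=C_5\cup C_{10}$, $H=C_3\cup C_6$) show that joining components into cycles and gluing triangles can produce $4$-chromatic graphs, so one cannot simply assert that some joining works: a concrete construction together with a verification that it stays within the hypotheses of \cite{FSct} is exactly the missing content. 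As it stands, the proposal does not constitute a proof of the conjecture.
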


Given all its restrictions, we hope that Conjecture \ref{Conj3col} may be approachable. Our main result in this paper is that the truth of Conjecture \ref{Conj3col} would imply a seemingly very difficult conjecture, the ``cycles plus $K_4$'s problem'' that we state here now.

\begin{conjecture}\label{scc2} Let $H$ be a graph with $\Delta(H)\leq 2$ and let $G$ be obtained from $H$ by gluing on vertex-disjoint copies of $K_4$. Then $\chi(G)\leq 4$.
\end{conjecture}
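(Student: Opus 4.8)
The strategy is to \emph{pin the fourth colour}. Assume Conjecture~\ref{Conj3col}, let $H$ satisfy $\Delta(H)\le 2$, and let $G$ be obtained from $H$ by gluing in vertex-disjoint copies $Y_1,\dots,Y_q$ of $K_4$. Each $Y_i$ induces a $K_4$ in $G$, so in any proper $4$-colouring it uses all four colours; conversely, if we can choose one vertex $s_i\in V(Y_i)$ for each $i$ so that $S:=\{s_1,\dots,s_q\}$ is independent in $G$ and $G-S$ is $3$-colourable, then giving every vertex of $S$ the colour $4$ completes a proper $4$-colouring of $G$. The key point is that $G-S$ is exactly the graph obtained from $H-S$ by gluing in the vertex-disjoint triangles $Y_1-s_1,\dots,Y_q-s_q$; moreover $\Delta(H-S)\le 2$, and deleting vertices from a disjoint union of paths and cycles leaves a disjoint union of paths together with those $C_3$-components of $H$ that $S$ avoids. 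Thus $G-S$ is automatically an instance of the $3$-colourability problem of Question~\ref{qtri}, and the whole task is to choose $S$ so that this instance lies in the restricted regime of Conjecture~\ref{Conj3col}. (Finding \emph{some} independent transversal $S$ is not the issue: the edges of $G$ joining vertices in distinct $Y_i$'s are all $H$-edges, so they induce maximum degree at most $2$ on $\bigcup_i V(Y_i)$, and since each $V(Y_i)$ has size $4=2\cdot 2$, Haxell's independent transversal theorem applies; the work is to upgrade this to a transversal with the right structural properties.)

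Before choosing $S$ I would normalise $(G,H)$ by moves that cannot decrease $\chi(G)$: discard any component of $H$ that meets no $Y_i$ (it is a path or cycle, hence $3$-colourable on its own); repeatedly close a path component of $H$ of length at least $2$ into a cycle by adding a single edge; and, by induction on the size of $G$, split off any ``lightly attached'' long cycle of $H$ — one that touches the rest of $G$ through only a bounded number of vertices — since such a cycle becomes a separate easy piece once its few separating vertices are coloured. After these reductions we may assume $G$ is connected and suitably well connected, every component of $H$ is a cycle or a triangle (plus perhaps one trivial leftover component), and every long cycle of $H$ is ``densely enough'' loaded with glued $K_4$'s. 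I would also record a reduction of scope: when $H$ has no triangle component at all, the paths of $H-S$ can simply be joined up into a single cycle (again only adding edges), and the result is a cycle with vertex-disjoint triangles glued on, which is $3$-colourable by the cycle-plus-triangles theorem of Fleischner and Stiebitz \cite{FSct}; so the genuine role of Conjecture~\ref{Conj3col} is to handle the triangle components of $H$.

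The choice of $S$ must guarantee that, writing $G-S$ as ``$(H-S)$ with glued triangles'': (i) no component of $H-S$ is a cycle of length other than $3$; (ii) every path component of $H-S$ has length at most $12$; and (iii) in $G-S$, every triangle of $H-S$ is path-linked to at most one other. For (i) and (ii) the idea is to spend the deleted vertices $s_i$ as cut-points: on a cycle of $H$ carrying several glued $K_4$'s we take $s_i$ to be a vertex of $Y_i$ on that cycle, breaking it into arcs, and the dense-loading hypothesis secured in the normalisation is exactly what lets us do this on every cycle at once — a bipartite matching (system of distinct representatives) condition between the cycles of $H$ and their incident $K_4$'s. An arc that is still long is then a path all of whose internal vertices have degree $2$ in $G-S$ and lie in no glued triangle, and for such configurations I would prove a \emph{path-smoothing lemma}: replacing a run of degree-$2$ triangle-free internal vertices by a single vertex does not affect $3$-colourability, because a path of length at least $2$ with both ends pre-coloured can always be completed from three colours. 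Iterating shrinks every gap to length at most $2$, and bounding each whole path by $12$ then reduces to bounding how many triangle-remnants a single path can meet, which one arranges by choosing the cut-points carefully where the $K_4$'s are packed. For (iii), note that a glued triangle $Y_i-s_i$ links two triangles of $H-S$ only if it meets both and its third surviving vertex lies on a path of length at least $2$; since the $Y_i$ are vertex-disjoint, at most three of them meet a given $C_3$-component of $H$, so a finite case analysis — aided in the extreme configurations by an auxiliary bipartite/edge-colouring argument as in the introduction — leaves enough freedom in each $s_i$ to destroy all but at most one link per triangle.

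With $S$ so chosen, Conjecture~\ref{Conj3col} gives a proper $3$-colouring of $G-S$, and colouring $S$ with the fourth colour yields $\chi(G)\le 4$, completing the reduction. I expect the main obstacle to be the \emph{simultaneous} realisation of (i)--(iii) by one transversal $S$: each condition constrains the choice of $s_i$ locally, but the constraints interact globally whenever a single $K_4$ meets several cycles or triangles of $H$, or a cycle is heavily loaded. Organising the choice — presumably through an auxiliary bipartite incidence structure among the glued $K_4$'s, the cycles and dense sub-arcs of $H$, and the triangle components of $H$, together with a matching or discharging argument that routes the deletions — is the heart of the argument, and is also where the constant $12$ in Conjecture~\ref{Conj3col} has to be calibrated.
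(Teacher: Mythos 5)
Your high-level strategy coincides with the paper's: delete an independent set $R$ containing exactly one vertex of each glued $K_4$, give $R$ the fourth colour, observe that $G-R$ is a ``paths and triangles plus glued triangles'' graph, and invoke Conjecture~\ref{Conj3col}. You also correctly isolate the three conditions (i)--(iii) that $G-R$ must satisfy. However, everything after that point is deferred (``I would prove\dots'', ``a finite case analysis\dots'', ``is the heart of the argument''), and the deferred steps are the theorem. Two of your specific sub-plans would moreover fail as stated. First, restricting the deleted set to vertices of the $K_4$'s cannot bound the path lengths of $H-S$: an arc of a long cycle between two consecutive chosen cut-points can contain arbitrarily many vertices that lie in glued triangles (each $K_4$ leaves three vertices behind), and your path-smoothing lemma only contracts runs of vertices in \emph{no} glued triangle, so it does not shorten such arcs; nor does the ``split off lightly attached cycles by induction'' normalisation repair this. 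The paper instead deletes \emph{additional} vertices that lie on long cycles but in no $K_4$ (every fifth vertex of each cycle of length at least $15$, and then one representative of each resulting segment of $4$ to $8$ vertices), which is precisely why it needs Haxell's total-domination ISR criterion (Theorem~\ref{domISR}) with a delicate component count, and a new theorem (Theorem~\ref{thm:combine-isr-gen}) for merging that ISR with the ISR of the $K_4$'s into one independent set with controlled failures.

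Second, your treatment of (iii) by a local case analysis per triangle cannot work: ``path-linked to at most one other'' is a condition on the induced link graph of the \emph{surviving} triangles, a triangle of $H$ can be linked to as many as six others, and a single $K_4$ meeting a triangle $T$, a long cycle, and two other triangles creates a link that no choice of $s_i$ destroys without demoting one of the triangles. The paper resolves this globally: it applies Lov\'asz's partition theorem to the link graph to extract a set $\mee$ of at least $t/4$ triangles inducing maximum degree $1$, decrees that exactly these triangles survive, and therefore must guarantee that the deleted set hits every triangle \emph{outside} $\mee$ --- and it is the extremal choice of $\mee$ (maximality plus edge-minimality, Claim~\ref{compBound}) that supplies the extra components needed for the total-domination count to reach $2|S|$. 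Neither the global selection of survivors nor any mechanism for simultaneously hitting all $K_4$'s, all cycle segments, and all non-surviving triangles with one independent set appears in your proposal, so the reduction is not established.
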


Note that Conjecture \ref{scc2}, as stated above, is not just ``cycles plus $K_4$'s'', but rather it allows paths in the base graph as well.
However it is easy to see that it suffices to prove Conjecture \ref{scc2} for 2-regular $H$ (other $H$ being subgraphs of these); hence the nickname. Our main result of this paper is concretely the following.

\begin{theorem}\label{reduc} Conjecture \ref{Conj3col} implies Conjecture \ref{scc2}.
\end{theorem}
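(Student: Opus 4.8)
The plan is to deduce Conjecture \ref{scc2} from Conjecture \ref{Conj3col} by a ``delete one colour class'' argument. Suppose $G$ is obtained from $H$ with $\Delta(H)\le 2$ by gluing in vertex-disjoint copies $Y_1,\dots,Y_q$ of $K_4$; as noted in the excerpt we may assume $H$ is $2$-regular, so $H$ is a disjoint union of cycles. The goal is to produce an independent set $I\subseteq V(G)$ such that $G-I$ satisfies the hypotheses of Conjecture \ref{Conj3col}; then a $3$-colouring of $G-I$ together with a fourth colour on $I$ gives the desired $4$-colouring of $G$. Since $Y_j$ is a clique and $K_4$ is not $3$-colourable, any $I$ for which $G-I$ is $3$-colourable must satisfy $|I\cap V(Y_j)|=1$ for every $j$, and then $Y_j-I$ is a triangle. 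Accordingly we will realise $G-I$ as the graph obtained from the base $H-I$ by gluing in the vertex-disjoint triangles $Y_1-I,\dots,Y_q-I$; one checks this really does reproduce $G-I$ (every edge of $G-I$ is either an edge of $H-I$ or a $K_4$-edge with both ends off $I$), and that the size condition $3q\le |V(H-I)|$ holds because the only vertices we put in $I$ beyond the transversal are vertices lying on no $K_4$.

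For $G-I$ to fall under Conjecture \ref{Conj3col} we need $I$, in addition to being an independent transversal of $\{Y_1,\dots,Y_q\}$, to satisfy: (i) every component of $H-I$ is a triangle or a path of length at most $12$; and (ii) in $G-I$ every triangle-component of $H-I$ is path-linked to at most one other. For (i), observe that a component of $H-I$ is a triangle only when it is a $C_3$-component of $H$ missed by $I$, and is a path otherwise; so (i) amounts to $I$ hitting every cycle of $H$ of length at least $4$, and hitting it often enough and in sufficiently spread-out positions that no arc between consecutive deleted vertices is long (any bounded length suffices here, and $12$ leaves ample room). For (ii) it is enough to arrange that $I$ meets every $C_3$-component of $H$ that contains a vertex of some $Y_j$: then every surviving triangle-component of $H-I$ is disjoint from $Y_1,\dots,Y_q$, hence from every glued-in triangle, and no path-linking occurs at all.

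The core of the proof, and the step I expect to be the main obstacle, is the construction of $I$ with all of these properties at once. I would first clear away degenerate configurations by a minimal-counterexample reduction: one may assume that every $C_3$-component of $H$ meets some $Y_j$ (otherwise it is a component of $G$ and can be deleted), and that no $C_3$-component is contained in a single $Y_j$ (if $X=\{a,b,c\}$ is a $C_3$ with $X\subseteq V(Y_j)$, delete $X$, $4$-colour the smaller graph by minimality, and recolour the triangle $X$ with the three colours avoiding the colour of the fourth vertex of $Y_j$, which is legal since $a,b,c$ have no neighbours outside $V(Y_j)$). The remaining task splits into (a) choosing the independent transversal of $Y_1,\dots,Y_q$, and (b) padding $I$ with extra vertices lying on no $K_4$ to break up long arcs of the long cycles and to catch any $C_3$-component the transversal missed. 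Part (b) is comparatively routine: on a long $K_4$-free stretch of a cycle one can insert deleted vertices roughly every dozen steps while keeping $I$ independent, and a $C_3$-component that meets a $K_4$ but is not swallowed by one always contains a vertex off all $K_4$'s that is safe to add. Part (a) is the delicate one: the ``must-hit'' demands coming from $C_3$-components all of whose vertices lie on $K_4$'s impose extra constraints on the transversal, but the conflict graph among the parts $Y_1,\dots,Y_q$ is a subgraph of $H$ and hence has maximum degree at most $2$, while each part has size $4=2\cdot 2$, which is exactly the threshold at which an independent transversal is guaranteed (Haxell); with care this can be upgraded to satisfy the finitely many local must-hit requirements as well. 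Finally, with $I$ in hand, Conjecture \ref{Conj3col} gives $\chi(G-I)\le 3$, and colouring $I$ with a fourth colour yields $\chi(G)\le 4$, completing the reduction.
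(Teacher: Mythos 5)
Your overall strategy---find an independent set $I$ that is a transversal of the $K_4$'s, delete it, and apply Conjecture \ref{Conj3col} to $G-I$---is the same as the paper's. The fatal gap is in your requirement (ii) together with part (a): you demand that $I$ hit \emph{every} $C_3$-component of $H$ meeting some $Y_j$, so that no path-linking survives at all. This is not achievable in general. A triangle component $T=\{a,b,c\}$ with $a\in Y_1$, $b\in Y_2$, $c\in Y_3$ meets $K_4$'s, is not contained in any single one, yet has no vertex off all $K_4$'s---so your claim that such a component ``always contains a vertex off all $K_4$'s that is safe to add'' is false, and $T$ can only be hit by the transversal itself. But each $Y_j$-representative lies in at most one $H$-triangle, so a transversal of $q$ copies of $K_4$ hits at most $q$ triangles; taking four disjoint triangles whose twelve vertices exactly fill three copies of $K_4$ shows at least one triangle must be missed, before independence constraints are even considered. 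Being ``exactly at the threshold'' $|Y_j|=2\Delta$ for Haxell's theorem means there is no slack to impose these extra must-hit requirements. Indeed, if your plan worked, every surviving $H$-triangle would be an isolated component of $G-I$ and the rest of $G-I$ would be paths plus glued triangles, which is $3$-colourable by Fleischner--Stiebitz alone---you would have proved Conjecture \ref{scc2} outright, without Conjecture \ref{Conj3col}.

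This impossibility is precisely why the paper's proof is structured as it is: it \emph{gives up} on hitting a carefully chosen subfamily $\mee$ of the triangles, selected via Lov\'asz's partition theorem so that $\Delta(G^{\mee})\le 1$ and $|\mee|\ge t/4$; the surviving triangles are then path-linked to at most one other, which is exactly the hypothesis Conjecture \ref{Conj3col} is designed to accommodate. The quantitative bound $|\mee|\ge t/4$ is then what makes Haxell's total-domination ISR criterion go through for the remaining parts (the broken-up long-cycle segments and the triangles outside $\mee$), and a separate ISR-combination theorem (Theorem \ref{thm:combine-isr-gen}) reconciles that ISR with the independent transversal of the $K_4$'s, since the two need not be simultaneously independent. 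Your part (b) also understates a second difficulty: a long cycle may be entirely covered by $K_4$-vertices, leaving nothing to ``pad'' with, so the requirement that $I$ hit every window of about $13$ consecutive cycle vertices must itself be loaded onto the transversal; the paper handles this by cutting long cycles into segments of size $4$ to $8$ and treating those segments as additional parts in the total-domination argument.
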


While we reduce Conjecture \ref{scc2} to Conjecture \ref{Conj3col} in this paper, we do not show that they are equivalent. However, an equivalence cannot be so far away from the truth. If Conjecture \ref{scc2} holds, then by deleting any one of the four colour classes we get a 3-colourable graph $G'$, and $G'$ is obtained by gluing vertex-disjoint triangles onto a graph $H'$ which is a disjoint union of paths and cycles.

To give some context for Conjecture \ref{scc2}, we must mention the \emph{strong chromatic number} of a graph $H$, denoted $s\chi(H)$, which was
introduced independently by Alon \cite{Al} and Fellows \cite{Fel}
in the late 1980's. Skipping this definition, let us just say that the  \emph{Strong Colouring Conjecture} posits that $s\chi(H)\leq 2\Delta(H)$; exact attribution of the conjecture is tricky, but the 1995 book of Jensen and Toft \cite{JT} (Section 4.14) has more on the early history of strong colouring.
Support for the Strong Colouring Conjecture has been given by numerous papers, eg. Haxell \cite{Hax04}\cite{Hax08}, by Aharoni, Berger and Ziv~\cite{ABZ}, Axenovich and Martin \cite{AM}, Johansson, Johansson and Markstr\"{o}m \cite{JJM}, and Lo and Sanhueza-Matamala~\cite{LS}. On the other hand, while the truth of the Strong Colouring Conjecture is trivial for $\Delta(H)=1$, (where it
asks essentially for the union of two matchings to be bipartite), the  $\Delta(H)=c$ case is open for all constants $c\geq 2$. In the most glaring open case of $\Delta(H)=2$, a result of Haxell \cite{Hax04} says that $s\chi(H)\leq 5$, but the conjectured upper bound is 4. In fact, the $\Delta(H)=2$ case of the Strong Colouring Conjecture is precisely Conjecture \ref{scc2} (see e.g. \cite{MP}). The only cases of Conjecture \ref{scc2} known to be true is when $H$ has at most one odd cycle of length exceeding $3$, or $H$ has at most $3$ triangles (McDonald and Puelo \cite{MP}). It seems that a new approach is needed to make a breakthrough on this problem, and our hope is that the reduction to Conjecture \ref{Conj3col} may help.

This paper proceeds as follows. The following section discusses so-called
\emph{independent sets of representatives} (ISRs), and states two results of   Haxell \cite{Hax95}\cite{Hax01} that will be needed for our main proof. We also prove a result about combining two ISRs into one, which generalizes a prior theorem of the second author and Puleo \cite{MP}, and may be of independent interest. The third section of the paper contains our proof of Theorem \ref{reduc}.

\section{Independent Sets of Representatives}

If $H$ is a graph and let $V_1, \ldots, V_n$ be
disjoint subsets of $V(H)$.  An \emph{independent set of representatives (ISR)} of $\{V_1, \ldots, V_n\}$ in $G$ is set $R\subseteq V(G)$ such that $R$ is independent in $G$ and $R$ contains exactly one vertex from each set $V_i$. If $R\subseteq V(G)$ is independent in $G$ and contains at most one vertex from each set $V_i$, then $R$ is said to be a \emph{partial ISR}; $R$ is said to \emph{hit} those $V_i$ for which it contains a representative. Note that if $R$ is a partial ISR of $\{V_1, \ldots, V_n\}$ in $G$ then it is an ISR of
$\{V_i: \textrm{$R$ hits $V_i$}\}$.

Haxell has proved the following.

\begin{theorem}[Haxell~\cite{Hax01}]\label{ISR4}
  If $H$ is a graph with $\Delta(H)\leq 2$ and $V_1, \ldots, V_n$
  are disjoint subsets of $V(H)$ with each $|V_i| \geq 4$,
  then $(V_1, \ldots, V_n)$ has an ISR.
\end{theorem}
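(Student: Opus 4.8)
The plan is to recognize Theorem~\ref{ISR4} as the extremal case $|V_i|\ge 2\Delta(H)$ of Haxell's independent transversal theorem and to prove it by an extremal‑plus‑repair argument on partial ISRs. First I would make two harmless reductions: delete every edge of $H$ that lies inside some $V_i$ (such edges can never be violated by an ISR, which picks exactly one vertex per class) and delete every vertex not in $\bigcup_i V_i$; both operations only decrease degrees, so we may assume $V(H)=\bigcup_i V_i$ and that $H$ has no edge inside a class. Now suppose for contradiction that no ISR exists, and choose a partial ISR $R$ hitting the maximum possible number of classes. After reindexing, $R$ misses $V_1$.

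The core is a repair step. By maximality of $R$, every vertex of $V_1$ has a neighbour in $R$, since otherwise we could add it to $R$ and hit one more class. Pick $D\subseteq R$ minimal subject to dominating $V_1$. Counting edges between $V_1$ and $D$, there are at least $|V_1|\ge 4$ of them and at most $2|D|$ of them because $\Delta(H)\le 2$, so $|D|\ge 2$; and minimality gives, for each $d\in D$, a \emph{private} vertex $v_d\in V_1$ whose only neighbour in $D$ is $d$. Since $R$ is a partial ISR, the vertices of $D$ lie in pairwise distinct classes, so $D$ meets at least two classes other than $V_1$. One would now like to swap some $d\in D$ out of $R$ and its private neighbour $v_d$ into $R$, obtaining a partial ISR that hits $V_1$ while losing only the class of $d$ --- a net wash --- and then iterate, each time moving to the newly missed class and enlarging the set of vertices involved. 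Because $H$ is finite and the set of ``used'' vertices grows monotonically (while the number of hit classes never drops and the bound $|D|\ge 2$ is available at every stage thanks to $|V_i|\ge 4=2\Delta$), the process cannot continue forever; the only way it can stop is by producing a partial ISR that hits strictly more classes than $R$, contradicting the choice of $R$.

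The delicate point --- and the step I expect to be the main obstacle --- is exactly this termination bookkeeping: a single swap need not keep $R$ independent, since the private neighbour $v_d$ may be adjacent to vertices of $R\setminus D$, so one must organize the iteration (choosing the minimal dominating sets and the swapped vertices carefully and keeping the relevant vertex sets nested) so that some potential strictly decreases at every stage while no hit class is ever lost. This is precisely where the hypothesis $|V_i|\ge 2\Delta$ is used, and where the constant $2$ in front of $\Delta$ is tight.

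A shorter, if less elementary, alternative would be to go through the topological method: by the theorem of Aharoni, Berger and Ziv it suffices to check that for every index set $S$ the independence complex of $H\bigl[\bigcup_{i\in S}V_i\bigr]$ has topological connectivity at least $|S|$, and for graphs of maximum degree at most $2$ --- disjoint unions of paths and cycles --- this connectivity is at least $|V|/4$ by the standard Meshulam-type bound; since $\bigl|\bigcup_{i\in S}V_i\bigr|\ge 4|S|$, the hypothesis is met. I would present the extremal argument as the main proof and mention this route only as a remark.
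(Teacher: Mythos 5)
This statement is not proved in the paper at all: it is quoted as a known theorem of Haxell \cite{Hax01} and used as a black box, so there is no in-paper argument to compare yours against. Judged on its own merits, your main argument has a genuine gap, and it is one you yourself flag. The opening moves are right (a maximum partial ISR $R$ missing $V_1$; a minimal dominating set $D\subseteq R$ for $V_1$; the count $|V_1|\leq 2|D|$ forcing $|D|\geq 2$; private neighbours), but the entire difficulty of Haxell's theorem lives in the iteration you defer. A single swap of $d$ for its private neighbour $v_d$ can destroy independence, the ``newly missed class'' can itself be blocked in a way that forces you to revisit classes already treated, and ``the set of used vertices grows monotonically, so the process stops'' does not by itself yield a contradiction --- you must also show that when the process stops you have strictly gained a class. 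That requires exhibiting a concrete potential (in Haxell's argument, a nested family of blocked classes together with a global counting inequality of the shape $2\Delta\cdot(\mbox{number of blocked classes})\geq\sum_i|V_i|$, which is where $|V_i|\geq 2\Delta$ is used for \emph{all} the blocked classes at once, not just for $V_1$). None of this bookkeeping is carried out, so the proof does not close; what you have written is an accurate description of the shape of Haxell's proof rather than a proof.

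Ironically, the route you relegate to a remark is the one that is essentially complete modulo citations: Aharoni--Berger--Ziv reduce the existence of an ISR to a lower bound on the (topological) connectivity of the independence complexes of the induced subgraphs $H[\bigcup_{i\in S}V_i]$, and for $\Delta(H)\leq 2$ the Meshulam-type bound gives connectivity at least $|V|/4\geq|S|$. If you want a self-contained write-up you should either carry out the swapping iteration in full detail following Haxell, or promote the topological argument to the main proof with precise statements of the two ingredients (being careful about the off-by-one conventions in how ``connectivity'' is normalized). For the purposes of this paper, of course, the correct move is simply to cite Haxell, as the authors do.
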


To state a second result of Haxell, we need a few additional definitions. To this end, a \emph{total dominating set} in a graph $G$ is a set of vertices $X$
such that every vertex in $G$ is adjacent to a vertex in $X$. In
particular, every vertex of $X$ must also have a neighbor in $X$. The
\emph{total domination number} of $G$, written $\totdom(G)$, is the
size of a smallest total dominating set; if $G$ has isolated vertices,
then by convention we set $\totdom(G) = \infty$. Note that by this definition, every (nonempty) graph has total domination number at least two.  Given a graph $H$ and disjoint subsets $V_1, \ldots, V_n \subset V(H)$, for each $S \subset [n]$ we define a subgraph $H_S$ by taking the subgraph induced by the vertex set $\bigcup_{i \in S}V_i$ and deleting all edges in $H[V_i]$ for every $i\in S$.

\begin{theorem}[Haxell~\cite{Hax95}]\label{domISR}
  Let $H$ be a graph and let $V_1, \ldots, V_n$ be disjoint subsets of
  $V(H)$. If, for all $S \subset [n]$, we have $\totdom(H_S) \geq 2|S| - 1$, then $(V_1, \ldots, V_n)$ has an ISR.
\end{theorem}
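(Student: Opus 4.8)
The plan is to assume Conjecture~\ref{Conj3col} and deduce $\chi(G)\le 4$ for every $G$ as in Conjecture~\ref{scc2}. As the introduction notes, it suffices to take $H$ to be $2$-regular, i.e.\ a disjoint union of cycles; moreover any component of $G$ that is a cycle of $H$ meeting no glued $K_4$ is $3$-colourable on its own, so we may assume every component of $G$ contains at least one glued $K_4$. The whole strategy is to exhibit a single independent set $R\subseteq V(G)$ such that $G':=G-R$ satisfies the hypotheses of Conjecture~\ref{Conj3col}; a $3$-colouring of $G'$ together with $R$ as a fourth colour class then yields $\chi(G)\le 4$.

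What must $R$ look like? Each glued copy $Q_1,\dots,Q_q$ of $K_4$ induces a $K_4$ in $G$, and $K_4$ is not $3$-colourable, so $R$ must meet each $Q_i$ in exactly one vertex $v_i$ — at least one so that $Q_i$ does not survive in $G'$, and at most one since $Q_i$ is complete. Then $E(G') = E(H-R)\cup\bigcup_i E(Q_i-v_i)$, so $G'$ is obtained from the base graph $\hat H:=H-R$ by gluing on the vertex-disjoint triangles $T_i:=Q_i-v_i$ (vertex-disjoint because the $Q_i$ are). Since $\hat H$ is a subgraph of a disjoint union of cycles, it is a disjoint union of paths and cycles, and its only possible triangle components are the $3$-cycle components of $H$ that $R$ avoids entirely. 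Thus it remains to choose $R$ so that: (i) $\hat H$ has no cycle of length $\ge 4$ and no path of length exceeding $12$ — equivalently, $\hat H$ is a disjoint union of triangles and paths of length at most $12$; and (ii) in $G'$, every triangle component of $\hat H$ is path-linked to at most one other.

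To construct $R$ I would work in two stages and merge. Stage 1: choose the ``$K_4$-part'' $R_1=\{v_1,\dots,v_q\}$. Since the only edges of $G$ running between distinct $Q_i$'s are edges of $H$, asking for $R_1$ to be independent in $G$ and to meet each $Q_i$ once is exactly asking for an ISR of $(V(Q_1),\dots,V(Q_q))$ in $H$; as $\Delta(H)\le 2$ and each $|V(Q_i)|=4$, Theorem~\ref{ISR4} supplies one. But a bare application of Theorem~\ref{ISR4} gives no control over which arcs of the cycles of $H$ survive, nor over (i)--(ii), so the choice of the $v_i$ must be refined — enlarging the ground sets, or invoking the stronger total-domination criterion of Theorem~\ref{domISR}, so as to break up long $K_4$-saturated arcs and to steer each remnant $T_i$ that meets a surviving triangle component of $\hat H$ away from long paths. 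Stage 2: on the cycles and over-long paths that persist in $H-R_1$, delete a further independent set $R_2$ of non-$K_4$ vertices to cut these into pieces of length at most $12$; realizing $R_2$ as an ISR of suitably chosen ``blocks'' — each with at least four free vertices (so Theorem~\ref{ISR4} applies inside the relevant degree-$\le 2$ subgraph of $H$) but not so long that the residual pieces exceed length $12$ — delivers $R_2$. Finally set $R=R_1\cup R_2$; the ISR-combining result of Section~2 (generalizing \cite{MP}) is exactly what guarantees that the two selections can be made jointly compatible, with no $H$-edge joining $R_1$ to $R_2$ and with the arc/block partition respected, so that $R$ is independent and $\hat H=H-R$ has the required shape.

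The main obstacle is precisely this construction together with the verification of (i) and (ii). Condition (i) is delicate when a cycle of $H$ contains long arcs built entirely from $K_4$-vertices whose representatives have been spent elsewhere, since then no vertex on such an arc may be deleted; the constant $12$ has to be tuned to the worst such configuration so that the ISR theorems and the combining lemma can always be applied to leave only short pieces. Condition (ii) requires care whenever a short cycle of $H$ — a prospective triangle component of $\hat H$ — is met by two or more glued copies of $K_4$: the deleted vertices of those copies must be chosen so that at most one of the resulting remnant triangles bridges to another triangle component through a path of length at least $2$. I expect the third section of the paper to consist largely of the case analysis showing that compatible choices always exist, with Theorems~\ref{ISR4} and~\ref{domISR} and the combining lemma performing the existential work at each step.
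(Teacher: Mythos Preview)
Your proposal does not address the stated theorem at all. The statement in question is Haxell's ISR criterion (Theorem~\ref{domISR}), which in this paper is quoted from~\cite{Hax95} and used as a black box; the paper gives no proof of it, and neither do you. What you have written is instead a sketch of the proof of Theorem~\ref{reduc}. So as a proof of the assigned statement there is nothing to evaluate.

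Treating your write-up as an attempt at Theorem~\ref{reduc}, the overall architecture --- find an independent $R$ hitting every glued $K_4$ so that $G-R$ meets the hypotheses of Conjecture~\ref{Conj3col}, build $R$ from two ISRs merged via Theorem~\ref{thm:combine-isr-gen} --- matches the paper. But the paper's proof turns on a device you do not have: a set $\mee\subseteq\tee$ of $H$-triangles chosen \emph{in advance} to satisfy (M1)--(M4), namely $\Delta(G^{\mee})\le 1$ with $|\mee|$ maximum and then $|E(G^{\mee})|$ minimum. These are precisely the triangles permitted to survive into $H'$, and (M2) is what delivers the path-linking bound (your condition (ii)) directly; (M3)--(M4) are what force, for every $T\in\tee\setminus\mee$, at least two edges to $\mee$ in $G^{\xee}$, which is the combinatorial content driving the component count in Claim~\ref{compBound} and hence the total-domination estimate $\totdom(G''_S)\ge 2|S|$. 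Your plan to ``steer'' the $K_4$-representatives case by case to enforce (ii) is not how the paper proceeds and, as stated, is not an argument.

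There is a second structural divergence. You propose to first pick the $K_4$-ISR $R_1$ and then, in Stage~2, delete further \emph{non-$K_4$} vertices $R_2$ to chop long paths. The paper does the opposite: it first deletes the evenly spaced sets $X_i^*$ from every long cycle to form $G'$ and the refined partition $\xee'$, and then seeks an ISR of $\xee'\setminus\mee$ in $G'$ (not of $\yee$). To make Theorem~\ref{domISR} applicable despite the size-$3$ triangle parts, it pads each $X'_i\in\mee$ with one vertex from each of $m$ auxiliary copies of $K_m$, forming $G''$ and $\xee''$; the resulting ISR, restricted back to $G'$, is the $R_{\tilde\xee}$ that is then combined with the easy $\yee$-ISR from Theorem~\ref{ISR4}. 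Your sketch contains neither the $\mee$-mechanism nor the $G''$ padding trick, and without them the appeal to Theorem~\ref{domISR} and the verification of (i)--(ii) are unsupported.
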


It is worth noting we have stated Theorem \ref{domISR} as it appeared in \cite{MP} -- the original formulation was in terms of hypergraphs.

Haxell's Theorems \ref{ISR4} and \ref{domISR} are both about the existence of a single ISR. We now prove a theorem about combining two ISRs into one. To this end, let $\xee = \{X_1, \ldots, X_p\}$ and $\yee = \{Y_1, \ldots, Y_q\}$ be two collections of pairwise disjoint subsets of $V(G)$, and suppose that $R_{\xee}, R_{\yee}$ are ISRs of $\xee,\yee$, respectively, in $G$. If $G[R_{\xee}\cup R_{\yee}]$ is an independent set of vertices, then we view this union $R_{\xee}\cup R_{\yee}$ as the successful combination of two ISRs into one, since it is simultaneously an ISR of both $\xee$ and $\yee$. In fact, we would be just as happy if we could find $R\subseteq R_{\xee}\cup R_{\yee}$ that was simultaneously an ISR of both $\xee$ and $\yee$. This won't always be possible, but we will look to delete vertices from $R_{\xee}\cup R_{\yee}$ so that the resulting $R$ is independent and is somehow close to hitting all the sets in both $\xee$ and $\yee$. We say that an edge $e\in G[R_{\xee}\cup R_{\yee}]$ is a \emph{$\xee$-edge} (\emph{$\yee$-edge}) if both endpoints of $e$ are in $X_i$ ($Y_i$) for some $X_i\in \xee$ ($Y_i\in \yee$); note that it is possible for an edge to be both a $\xee$-edge and a $\yee$-edge. We define $E_{\xee\yee}$ to be all those edges of $G[R_{\xee}\cup R_{\yee}]$ that are neither $\xee$-edges nor $\yee$-edges.

\begin{theorem}\label{thm:combine-isr-gen}
Let $G$ be a graph. Suppose that $R_\xee$ is an ISR of $\xee$ in $G$ and $R_\yee$ is an ISR of $\yee$ in $G$. For all $X\in \xee$, denote by $v_X$ the representative of $X$ in $R_\xee$. Then $G$ has an independent set $R \subseteq R_\xee \cup R_\yee$ that is an ISR of $\yee$ and such that for every $X \in \xee$:
\begin{itemize}
    \item[(a)] if $v_X$ is not incident to any $E_{\xee\yee}$-edges, then $R$ hits $X$, and;
    \item[(b)] if $v_X$ is incident to at least one $E_{\xee\yee}$-edge, then $R$ hits $X \cup \{w\}$ for some $E_{\xee\yee}$-edge $wv_X$.
\end{itemize}
\end{theorem}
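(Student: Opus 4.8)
The plan is to build $R$ greedily by starting from $R_\yee$ (which already hits every set in $\yee$) and then, for each $X \in \xee$ in turn, deciding what to add to take care of $X$ without creating edges. First I would set $R_0 = R_\yee$ and note that this is independent and an ISR of $\yee$. The key observation is that adding the vertex $v_X$ (the $R_\xee$-representative of $X$) to our current set $R$ can only create edges of $G[R_\xee \cup R_\yee]$; such an edge is either an $\xee$-edge, a $\yee$-edge, or an $E_{\xee\yee}$-edge. Since $R_\xee$ is independent, $v_X$ is incident to no $\xee$-edge. So the only obstruction to adding $v_X$ is a neighbour of $v_X$ already in $R$, joined by a $\yee$-edge or an $E_{\xee\yee}$-edge.

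Here is where the two cases of the statement come in. Suppose first that $v_X$ is incident to no $E_{\xee\yee}$-edge. Then any neighbour $u$ of $v_X$ inside $R_\xee \cup R_\yee$ is joined to $v_X$ by a $\yee$-edge, meaning $u, v_X \in Y_i$ for some $i$; but the sets in $\yee$ are pairwise disjoint, so all such neighbours lie in a single $Y_i$, and moreover $v_X \in Y_i$ as well. In this situation I would \emph{swap}: remove from $R$ whatever representative of $Y_i$ it currently contains (at most one vertex, since $R$ is a partial ISR of $\yee$ at every stage) and add $v_X$ instead. This keeps $R$ independent (the only neighbours of $v_X$ in $R_\xee\cup R_\yee$ were the ones we just deleted) and keeps $Y_i$ hit (now by $v_X$), so $R$ remains an ISR of $\yee$; and now $R$ hits $X$, giving conclusion (a). If on the other hand $v_X$ has no neighbour in the current $R$ at all, simply add $v_X$.

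Now suppose $v_X$ is incident to at least one $E_{\xee\yee}$-edge, say $wv_X$ with $w \in R_\xee \cup R_\yee$. Here I would not try to put $v_X$ into $R$ at all; instead I keep $w$ (if it is already in $R$) or, if some conflict prevents it, argue that $v_X$'s presence is not needed. Concretely: process the $X \in \xee$ with no incident $E_{\xee\yee}$-edge first, using the swap argument above to get them all hit; then for each remaining $X$, pick one $E_{\xee\yee}$-edge $wv_X$. If the resulting $R$ (after the first phase) already contains $w$ or $v_X$ for each such edge, we are done with conclusion (b). The delicate point is whether the first-phase swaps might have evicted a vertex $w$ that we now want. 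Since each first-phase swap only ever \emph{adds} a vertex $v_{X'}$ that is incident to no $E_{\xee\yee}$-edge and removes a $Y_i$-representative, and since $wv_X$ being an $E_{\xee\yee}$-edge forces $w \notin X'$ for the relevant $X'$ and $w$ not to be the unique $Y_i$-representative that got removed for the same reason, one checks that $w$ survives; if it does not survive because $w$ itself equals some $v_{X''}$ that was swapped out, then that only happens when $v_{X''}$ had a $\yee$-neighbour, and we can route around it. I would organise all of this by instead processing vertices in a fixed order and maintaining the invariant that $R$ is independent, is a partial ISR of $\yee$ hitting every $Y_i$, and that every already-processed $X$ satisfies (a) or (b).

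\textbf{Main obstacle.} The real work is the bookkeeping in the second phase: ensuring that the swaps performed to satisfy (a) for the ``easy'' sets $X$ do not destroy the $E_{\xee\yee}$-edge endpoints $w$ that witness (b) for the ``hard'' sets $X$, and that distinct hard sets do not compete for the same $w$ in a way that forces a conflict. I expect the clean way to handle this is to observe that the swap in phase one only exchanges a $Y_i$-representative for $v_{X'}$, and $v_{X'}$ has \emph{no} $E_{\xee\yee}$-edge, so $v_{X'} \ne w$ for any witness $w$; hence the only vertices that ever leave $R$ are original $\yee$-representatives, and one argues that the particular $Y_i$-representatives needed as witnesses $w$ are never the ones displaced — or, if forced to displace such a $w$, that $v_X$ itself can then be added in its place, still yielding an $E_{\xee\yee}$-edge $wv_X$ with $w \in R$ replaced by the acceptable alternative of $v_X \in R$ hitting $X$ directly. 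Making this argument airtight, rather than merely plausible, is the heart of the proof.
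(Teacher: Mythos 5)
Your strategy is genuinely different from the paper's: you build $R$ bottom-up from $R_\yee$ by adding and swapping in $\xee$-representatives, whereas the paper starts from the full union $R_\xee\cup R_\yee$ and runs a prioritized deletion algorithm (delete the neighbours of certain ``dangerous'' low-degree vertices first, then clean up). Your phase one is fine: since $R_\xee$ is independent and $v_{X'}$ has no incident $E_{\xee\yee}$-edge, each such $v_{X'}$ has at most one neighbour in $R_\xee\cup R_\yee$, namely the $R_\yee$-representative of the $Y$ containing $v_{X'}$, so the swap preserves independence and the ISR property for $\yee$, and no phase-one vertex is ever later evicted.

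The gap is exactly where you place it, and it is a real one, not a routine verification. For a ``hard'' set $X$, every $E_{\xee\yee}$-neighbour $w$ of $v_X$ lies in $R_\yee$ (it cannot lie in $R_\xee$ by independence of $R_\xee$), and every such $w$ may be evicted by a phase-one swap. Your fallback is to insert $v_X$ itself, but $v_X$ may be blocked by its unique $\yee$-neighbour $u$ still sitting in $R$; swapping $u$ out for $v_X$ keeps $\yee$ hit, but $u$ may itself be the only surviving witness $w'$ for some other hard set $X''$, whose repair may in turn evict a witness for a third set, and so on. You give no argument that this cascade terminates or can always be resolved consistently, and you say yourself that making it airtight ``is the heart of the proof.'' The paper avoids the cascade entirely by working top-down: its Steps 1--3 identify a vertex $v$ whose entire neighbourhood in the current set is deleted in one move, so that $v$ is permanently protected, and the priority order of the steps (isolated-but-for-$E_{\xee\yee}$-edges first, then degree-one via an $\xee$-edge, then degree-one via a $\yee$-edge) is what guarantees that the surviving vertex is always an acceptable representative or witness. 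Some device of this kind --- a global ordering or potential function controlling the repairs --- is missing from your proposal, so as written it does not constitute a proof.
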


Before we proceed to the proof of Theorem \ref{thm:combine-isr-gen}, it may be remarked that if $E_{\xee\yee} = \emptyset$, then Theorem \ref{thm:combine-isr-gen} provides an independent set $R$ which is an ISR of both $\xee$ and $\yee$. In this way Theorem \ref{thm:combine-isr-gen} is a generalization of Lemma 3.3 of the second author and Puleo in \cite{MP} (their hypothesis that $(\xee, \yee)$ is an ``admissible-pair'' in $G$ implies that $E_{\xee\yee}=\emptyset$). Let us now prove Theorem \ref{thm:combine-isr-gen}.

\begin{proof}[Proof of Theorem \ref{thm:combine-isr-gen}]
Initially, let $R_0 = R_{\xee} \cup R_{\yee}$. The set $R_0$
clearly hits every $X_i \in \xee$ and $Y_j \in \yee$ in $G$, but as there may be edges between $R_{\xee}$ and $R_{\yee}$, the set $R_0$ may not be independent. We will describe an algorithm for iteratively deleting
vertices from $R_0$ in order to obtain an independent subset of $R_0$ which still hits every $Y\in \yee$ and also meets conditions (a) and (b) for each $X\in \xee$.  First however, let us prove the following claim.

\begin{claim}\label{claim:edgedeg}
Every vertex of $R_{\xee}$ is incident to at most one $\yee$-edge, and every vertex
of $R_{\yee}$ is incident to at most one $\xee$-edge.
\end{claim}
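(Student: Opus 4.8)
\textbf{Proof proposal for Claim \ref{claim:edgedeg}.} The plan is to prove the first assertion directly and obtain the second by the symmetric argument. Suppose, for contradiction, that some vertex $v \in R_{\xee}$ is incident to two distinct $\yee$-edges, say $e_1 = vw_1$ and $e_2 = vw_2$. By definition of a $\yee$-edge, there are indices $j_1, j_2$ with $v, w_1 \in Y_{j_1}$ and $v, w_2 \in Y_{j_2}$; since the sets in $\yee$ are pairwise disjoint and $v$ lies in both, we must have $j_1 = j_2 =: j$, so $v, w_1, w_2 \in Y_j$.

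The crucial observation is that each $w_i$ is pinned down uniquely. Indeed, $e_i$ is an edge of $G[R_{\xee} \cup R_{\yee}]$, so $w_i \in R_{\xee} \cup R_{\yee}$. If $w_i \in R_{\xee}$, then $vw_i$ would be an edge inside $R_{\xee}$, contradicting that $R_{\xee}$ is independent; hence $w_i \in R_{\yee}$. But $R_{\yee}$, being an ISR of $\yee$, contains exactly one vertex of $Y_j$, so $w_1$ and $w_2$ are both equal to that unique representative of $Y_j$ in $R_{\yee}$. Therefore $w_1 = w_2$ and $e_1 = e_2$, a contradiction. This shows every vertex of $R_{\xee}$ is incident to at most one $\yee$-edge.

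Interchanging the roles of $\xee$ and $\yee$ throughout (using that $R_{\yee}$ is independent and $R_{\xee}$ is an ISR of $\xee$) gives that every vertex of $R_{\yee}$ is incident to at most one $\xee$-edge, completing the claim.

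There is essentially no serious obstacle here; the argument is a short counting argument. The one point to be slightly careful about is that $v$ may lie in $R_{\xee} \cap R_{\yee}$, but the key deduction — that any neighbour $w$ of $v$ inside $Y_j$ must be the unique $Y_j$-representative in $R_{\yee}$ — uses only the independence of $R_{\xee}$ and the fact that $R_{\yee}$ hits $Y_j$ exactly once, so it goes through regardless.
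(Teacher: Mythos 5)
Your proof is correct and follows essentially the same route as the paper's: use disjointness of the sets in $\yee$ to force both edges into a single $Y_j$, then use the ISR property of $R_{\yee}$ to force the two other endpoints to coincide. The only difference is cosmetic — you explicitly justify that $w_i\in R_{\yee}$ via the independence of $R_{\xee}$, a step the paper's proof asserts without comment.
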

\begin{proofc}
  Suppose that $u \in R_{\xee}$ and that $uv_1$, $uv_2$ are two different
  $\yee$-edges incident to $u$. Then $v_1, v_2 \in R_{\yee}$ with
  $\{u, v_1\} \subseteq Y_1$ and $\{u, v_2\} \subseteq Y_2$ for
  some $Y_1, Y_2 \in \yee$. Since the sets in $\yee$ are
  pairwise vertex-disjoint, this implies that $Y_1 = Y_2$. But since $v_1, v_2\in R_{\yee}$, we must have $Y_1\neq Y_2$, contradiction. The same argument, interchanging the roles of $\xee$ and $\yee$, proves the claim about
$\xee$-edges.
\end{proofc}

Our algorithm defines a sequence of vertex sets $R_0, R_1, \ldots$ starting with
$R_0 = R_{\xee} \cup R_{\yee}$. Given some set $R_{i}$, we
either produce a new set $R_{i+1}$ and proceed to the next round, or
produce the final set $R$, via the following algorithm.\\

\noindent{\bf Step 1.} If there is a vertex $v\in R_{\yee}$ which is isolated in $G[R_i]\setminus E_{\xee\yee}$ but is incident to least one $E_{\xee\yee}$-edge in $G[R_i]$, then: form $R_{i+1}$ from $R_i$ by deleting all vertices that are adjacent to $v$ in $G[R_i]$, and then go back to the start of {\bf Step 1}.

\noindent{\bf Step 2.} If there is a vertex $v\in R_{\yee}$ which
has degree $1$ in $G[R_i]\setminus E_{\xee\yee}$, and this one edge is a $\xee$-edge, then: form $R_{i+1}$ from $R_i$ by deleting all vertices that are adjacent to $v$ in $G[R_i]$, and then go back to the start of {\bf Step 1}.

\noindent{\bf Step 3.} If there is a vertex $v\in R_{\xee}$ which has degree $1$ in $G[R_i]$, and this one edge is a $\yee$-edge, then: form $R_{i+1}$ from $R_i$ by deleting the one vertex in $R_i$ that is adjacent to $v$ in $G[R_i]$, and then go back to the start of {\bf Step 1}.

\noindent{\bf Step 4.} Otherwise, obtain $R$ from $R_i$ by deleting every vertex of $R_{\yee} \cap R_{i}$ that has positive degree in $G[R_{i}]$, and then terminate. \\

We call any vertex $v$ found in {\bf Steps $1$--$3$} above a \emph{dangerous} vertex (for $R_i$), and only reach Step 4 when $R_i$ has no dangerous vertices. Note that vertices which were not initially dangerous for $R_0$ may become dangerous for some later $R_i$ as their neighbors are deleted. However, once a dangerous vertex $v$ is found in one of {\bf Steps 1--3}, all of $v$'s neighbours in $G[R_i]$ are deleted, and hence $v$ will be isolated in $G[R_j]$ for all $j\geq i$, and will remain until the end of the algorithm and be a member of our terminal $R$.

The algorithm always terminates, since
$|R_{i+1}| < |R_{i}|$ whenever $R_{i}$ has a dangerous
vertex. Moreover, the terminal set $R$ is always independent due to {\bf Step 4}.  It remains to show that $R$ hits every set $Y \in \yee$ and also meets conditions (a) and (b) for each $X\in \xee$.

Consider any set $Y \in \yee$. Let $w$ be the representative of $Y$ in $R_\yee$. If $w \in R$, then $R$ hits $Y$ and we are done. Therefore, we suppose that $w$ was deleted by the algorithm. If $w\in R_{\yee}\cap R_{\xee}$ then $w$ is isolated in $G[R_0]$ and never gets deleted. So $w\in R_{\yee}\setminus R_{\xee}$ and hence must have been deleted in {\bf Step 3} or {\bf Step 4}.

First suppose that $w$ was deleted in {\bf Step 3} due to $v$ being a dangerous vertex for $R_i$. Then, based on our earlier comments,  $v \in R$. Since the deletion of $w$ happened in {\bf Step 3}, we know that $vw$ is a $\yee$-edge, so $v,w\in Y'$ for some $Y'\in \yee$. But the sets in $\yee$ are disjoint and $w\in Y$, so in fact $Y'=Y$, and $v$ ensures that $R$ hits $Y$.

Now assume that $w$ was deleted in {\bf Step 4} after determining that $R_i$ has no dangerous vertices. We know that $w$ is not isolated in $G[R_i]$, since otherwise it would not have been deleted in {\bf Step 4}. So since $w$ is not dangerous for $R_i$ according to {\bf Step 1}, it must be incident to at least one $\xee$-edge or $\yee$-edge in $G[R_i]$. In fact, since $w$ is not dangerous for $R_i$ according to {\bf Step 2}, and given Claim \ref{claim:edgedeg}, $w$ must have either degree one or two in $G[R_i]$; in the former case its incident edge is a $\yee$-edge and in the latter case it is incident to  both a $\xee$-edge and a $\yee$-edge (which are distinct). In either case we know that there is a $\yee$-edge incident to $w$ in $G[R_i]$, say $wu$. Since $w\in Y$ we get that $u \in Y$ as well. Since $w$ is the lone representative for $Y$ in $R_{\yee}$ (or since $u\sim w$ and $R_{\yee}$ is independent) we know that $u\not\in R_{\yee}$, so $u$ is not deleted in this {\bf Step 4}. But since this is the very last step of our algorithm, no subsequent step could have deleted $u$ either, so $u \in R$ at the
end. Hence $R$ hits $Y$.

Now consider any $X \in \xee$. If $v_X \in R$, then $R$ hits $X$ (and thus hits $X \cup \{u\}$ for every $E_{\xee\yee}$-edge $uv_X$ in $G$) and we are done. If $v_X \in R_{\xee}\cap R_{\yee}$ then $v_X$ is isolated in $G[R_0]$ and never gets deleted. Therefore, we suppose $v_X$ was deleted by the algorithm and $v_X \in R_{\xee}\setminus R_{\yee}$. Hence, $v_X$ must have been deleted in {\bf Step 1} or {\bf Step 2}.

First assume that $v_X$ was deleted in {\bf Step 1} due to $w$ being a dangerous vertex for $R_i$. Then, based on our earlier comments, $w \in R$. But then $v_X,w$ are joined by an \emph{$E_{\xee\yee}$-edge} and  $R$ hits $X \cup \{w\}$.

Now we may assume that $v_X$ was deleted in {\bf Step 2} due to $w$ being a dangerous vertex for $R_i$. Then, again based on our earlier comments, $w \in R$. We know that either $v_Xw$ is an \emph{$\xee$-edge} or $v_Xw$ is an $E_{\xee\yee}$-edge. In the latter case, $R$ hits $X \cup \{w\}$. In the former case, $w\in X$ and so $R$ hits $X$. Either way, conditions (a) and (b) are satisfied.
\end{proof}

\section{Proof of Theorem \ref{reduc}}

Within our proof of Theorem \ref{reduc}, we will find occasion to use the following classic result of Lov\'{a}sz \cite{Lov}  (see also Theorem 20 in \cite{Wood}).

\begin{theorem}[Lov\'{a}sz \cite{Lov}]\label{lov} Let $d, D$ be a non-negative integers and let $k=\lfloor\tfrac{D}{d+1}\rfloor+1$. If $G$ is a graph with maximum degree $D$, then $V(G)$ can be partitioned onto $k$ sets $V_1, \ldots V_k$ such that $G[V_i]$ has maximum degree at most $d$ for all $i\in\{1, 2, \ldots, k\}$.
\end{theorem}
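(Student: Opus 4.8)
\section*{Proof proposal for Theorem \ref{lov}}

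The plan is the classical extremal‐partition (local switching) argument. Among all ordered partitions $(V_1,\dots,V_k)$ of $V(G)$ into $k$ parts (with empty parts allowed), fix one that minimizes the number of \emph{internal} edges
\[
m \;:=\; \sum_{i=1}^{k} e(G[V_i]);
\]
such a minimizer exists since $m$ is a nonnegative integer and at least one such partition (everything in $V_1$) exists. I will show this minimizing partition already has $\Delta(G[V_i]) \le d$ for every $i$. Write $D = q(d+1) + r$ with $q = \lfloor D/(d+1)\rfloor$ and $0 \le r \le d$, so that $k = q+1$.

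Suppose toward a contradiction that some vertex $v$, say with $v \in V_1$, satisfies $|N(v) \cap V_1| \ge d+1$. Note this forces $D \ge d+1$, hence $q \ge 1$ and $k \ge 2$. (If instead $D \le d$, then $k = 1$ and the single‑part partition $V_1 = V(G)$ trivially satisfies the conclusion, so there is nothing to prove.) If it were the case that $|N(v) \cap V_j| \ge d+1$ for every $j \in \{2,\dots,k\}$ as well, then
\[
\deg_G(v) \;\ge\; (d+1)k \;=\; (d+1)(q+1) \;=\; (D - r) + (d+1) \;\ge\; D + 1,
\]
using $r \le d$, which contradicts $\Delta(G) = D$. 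Therefore some part $V_j$ with $j \ne 1$ satisfies $|N(v) \cap V_j| \le d$.

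Now move $v$ from $V_1$ to $V_j$, obtaining a new partition. The only edges whose internal/crossing status can change are those incident to $v$: the count of internal edges inside $V_1$ drops by $|N(v) \cap V_1| \ge d+1$, the count inside $V_j$ rises by $|N(v) \cap V_j| \le d$, and no other $G[V_i]$ is affected. Hence $m$ strictly decreases, contradicting minimality. So no vertex has $d+1$ neighbours inside its own part, i.e. $\Delta(G[V_i]) \le d$ for all $i$, and the theorem follows.

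This argument has no real obstacle; the only points requiring care are the small‑parameter bookkeeping --- the degenerate case $D \le d$, where $k = 1$ and one cannot move a vertex to another part, must be dispatched separately and trivially --- and the one‑line arithmetic identity $(d+1)\bigl(\lfloor D/(d+1)\rfloor + 1\bigr) \ge D+1$, which the remainder decomposition $D = q(d+1)+r$ with $0\le r\le d$ makes transparent.
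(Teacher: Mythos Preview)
Your proof is correct and is the standard extremal (local switching) argument for Lov\'asz's theorem. Note, however, that the paper does not supply its own proof of Theorem~\ref{lov}: it is quoted as a classical result of Lov\'asz (with a reference to~\cite{Lov} and to Theorem~20 in~\cite{Wood}) and then invoked as a black box inside the proof of Theorem~\ref{reduc}. So there is nothing in the paper to compare against; your argument is precisely the proof one would find in the cited sources.
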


Let us now proceed with our main proof.

\begin{proof}\emph{(Theorem \ref{reduc})}
Let $H$ be a graph with $\Delta(H)\leq 2$, and let $G$ be obtained from $H$ by gluing on vertex-disjoint copies of $K_4$. We show that, under the assumption that Conjecture \ref{Conj3col} is true, $\chi(G)\leq 4$. As previosuly discussed, we may assume that $H$ is 2-regular.

Let $X_1, \ldots, X_p$ be the vertex sets of the components of $H$ and
let $Y_1, \ldots, Y_q$ be the vertex sets of the added copies of $K_4$. Let $\xee = \{X_1, \ldots, X_p\}$ and let
  $\yee = \{Y_1, \ldots, Y_q\}$.
  Let $\tee\subseteq\xee$ correspond to those components of $H$ which are triangles, with $t=|\tee|$.
  For any $\wee\subseteq \xee$ define $G^{\wee}$ to be the the graph with vertex set $\tee$ and where two vertices are joined by an edge if the corresponding $X_i, X_j$ have the property:
  there exists (at least one) $Y\in \yee$ with $Y\cap X_i, Y\cap X_j , Y\cap X_k\neq \emptyset$ for some $X_k\in\xee$ corresponding to a cycle of length at least four.

Choose $\mee$ such that:
\begin{enumerate}
\item[(M1)] $\mee\subseteq \tee$;
\item[(M2)] $\Delta(G^{\mee})\leq 1$;
\item[(M3)] $|\mee|$ is maximum, subject to (M1) and (M2), and;
\item[(M4)] $|E(G^{\mee})|$ is minimum, subject to (M1), (M2), (M3).
\end{enumerate}
We can prove the following about the size of $\mee$.

\begin{claim}\label{Msize}
$|\mee| \ge \tfrac{t}{4}.$
\end{claim}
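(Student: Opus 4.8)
\textbf{Proof proposal for Claim \ref{Msize}.}

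The plan is to show that the maximality of $\mee$ (conditions (M1)--(M3)) forces most triangle-components of $H$ into $\mee$, by analyzing the ``defect'' set $\tee \setminus \mee$ and using the degree bound (M2) together with an edge-counting argument. First I would observe that $G^{\tee}$ has a simple structural property: by the definition of $G^{\wee}$, each edge of $G^{\tee}$ is witnessed by a glued-in $K_4$ that meets two distinct triangle-components and also meets a long-cycle component; since each $K_4$ has only four vertices, each such $K_4$ can witness at most $\binom{2}{2}=1$ edge among triangles (it touches at most two triangle-components once we also reserve a vertex for the long cycle, in fact at most $3$ of its vertices lie in $H$-components other than... ) — I would make precise that the edges of $G^\tee$ incident to a fixed triangle-component $X_i$ are in bounded correspondence with the glued-in $K_4$'s meeting $X_i$, and since $|X_i|=3$ there are at most $3$ such $K_4$'s, so $\Delta(G^{\tee}) \le 3$ (or some explicit constant $c$). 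This is the key structural input.

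Next, with $D := \Delta(G^{\tee})$ bounded by a constant, I would apply Theorem \ref{lov} (Lov\'asz) with $d=1$ to partition $\tee$ into $k = \lfloor D/2\rfloor + 1$ parts $V_1,\dots,V_k$, each inducing a subgraph of $G^{\tee}$ of maximum degree at most $1$. Each $V_j$ is then a legal candidate for $\mee$ — it satisfies (M1) and (M2) — so by the maximality condition (M3) we get $|\mee| \ge |V_j|$ for every $j$; picking the largest part gives $|\mee| \ge t/k$. If the structural bound yields $D \le 3$, then $k = 2$, which is too weak; I would need $D \le 1$ to get $k=1$, or combine with the fact that we only need $t/4$, which corresponds to $k \le 4$, i.e. $D \le 7$. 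So as long as the structural argument delivers $\Delta(G^{\tee}) \le 7$ we are done; I expect the honest bound to be $\Delta(G^{\tee}) \le 3$, giving $k=2$ and hence the much stronger $|\mee| \ge t/2$, so $t/4$ is comfortably within reach.

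The main obstacle I anticipate is pinning down the correct constant bound on $\Delta(G^{\tee})$, i.e. correctly accounting for how many edges of $G^{\tee}$ can be incident to a single triangle-vertex. One must be careful: a single glued-in $K_4$ has $4$ vertices; if it contributes an edge to $G^{\tee}$ it uses at least one vertex in a long-cycle component, leaving at most $3$ vertices to distribute among triangle-components, and at most $3$ of those can land in $3$ distinct triangles — but to be incident to a \emph{fixed} triangle $X_i$ it must use at least one vertex in $X_i$, leaving at most $2$ vertices for other triangles, so one $K_4$ contributes at most $2$ edges at $X_i$. Since $|X_i| = 3$, at most $3$ glued-in $K_4$'s can meet $X_i$, giving at most $6$ edges at $X_i$ counted with multiplicity — but $G^{\tee}$ is simple, so $\deg_{G^{\tee}}(X_i) \le 6$, hence $k = \lfloor 6/2\rfloor+1 = 4$ and $|\mee| \ge t/4$ exactly. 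I would double-check whether the ``$K_4$'s are vertex-disjoint'' hypothesis and the structure of $H$ (each triangle vertex lies in exactly one glued-in $K_4$, since the $K_4$'s are vertex-disjoint) in fact forces each triangle to meet at most $3$ $K_4$'s (one per vertex), which is what makes the count work; this disjointness is precisely what keeps the constant finite, and getting the bookkeeping exactly right is where the real care lies.
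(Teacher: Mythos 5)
Your proposal is correct and matches the paper's proof essentially exactly: the final paragraph's bookkeeping (each glued $K_4$ contributing to the degree of a fixed triangle must spend one vertex on a long cycle and one in that triangle, hence adds at most $2$ to its degree; vertex-disjointness gives at most $3$ such $K_4$'s per triangle; so $\Delta \le 6$, and Lov\'asz with $D=6$, $d=1$ yields $4$ parts, the largest of which has at least $t/4$ elements and satisfies (M1)--(M2), so (M3) finishes). The earlier tentative counts in your second and third paragraphs (e.g.\ ``$\Delta \le 3$'') are not right, but you correct them yourself before concluding.
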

\begin{proofc}
Let $T\in \tee$ and consider its degree in $G^{\xee}$. Suppose $v\in T$ and $v\in Y\in \yee$. If $Y$ contributes to the degree of $T$ in $G^{\xee}$, then at least one of the four vertices in $Y$ must be a member of a non-triangle cycle in $H$. But in this case, $Y$ contains vertices from at most two different members of $\tee$, aside from $T$, and hence $Y$ contributes at most two to the degree of $T$ in $G^{\xee}$. Since $T$ has size three, this means that $T$ has degree at most $6$ in $G^{\xee}$.
Applying Theorem \ref{lov} to the graph $G^{\xee}$ with $D=6$ and $d=1$ gives a partition of $V(G^{\xee})$ into four parts, each of which induces a subgraph of maximum degree at most one. If we take the subset of $\tee$ corresponding to the largest of these parts, then it has at least $t/4$
elements.
\end{proofc}

We consider now the long cycles in $H$, which we intend to break into short pieces by deleting some vertices. In particular, we let $L=\{i: X_i\in \xee, |X_i|\geq 15\}$. For each $i\in L$, if $X_i$ consists of the vertex set of the cycle $(x_1, x_2, \ldots, x_{l_i})$, we let $p_i = \lfloor \tfrac{l_i}{5} \rfloor$ and define $X_i^* = \{x_{5(j)-4}: 1\leq j\leq p_i\}$. We then let $X_i^1, \ldots, X_i^{p_i}$ be the vertex sets of the $p_i$ cycle-segments created by deleting $X_i^*$, namely $X_i^j = \{x_{5j-3}, \ldots, x_{5j}\}$ for $1\le j \le p_i-1$ and $X_i^{p_i} = \{x_{5p_i-3}, \ldots, x_{l_i}\}$. Note that $|X_i^j| = 4$ for $1 \le j \le p_i-1$ and $4 \le |X_i^{p_i}|\le 8$.

    Let $G'$ be the graph formed from $G$ by deleting all the vertices in all the sets $X_i^*$, that is,  $G'= G \setminus \left(\bigcup_{i\in L} X_i^{*}\right)$. We obtain $\xee'$ from $\xee$ by replacing each $X_i$, $i\in L$, with the $p_i$ sets $X_i^1, \ldots X_i^{p_i}$. Let $\xee' = \{X'_1, X'_2, \ldots, X'_{p'}\}$, noting that $p'\geq p=|\xee|$. Note also that we still have $\mee\subseteq \xee'$, since triangles are not affected by the deletion process.

The goal in this next section of our proof will be to find an ISR of $\xee'\setminus \mee$ in the graph $G'$.
Since we are unconcerned with hitting the triangle parts, it will serve us to form an auxiliary graph $G''$ by taking the disjoint union of $G'$ along with $m= |\mee|$ copies of $K_m$.
For all $X'_i \not \in \mee$, define $X''_i=X_i'$
and otherwise define $X''_i$ to be $X_i'$ together with one vertex from each copy of $K_m$, chosen so that $X_1'', \ldots, X_{p'}''$ are disjoint, and let $\xee''$ be this last collection of sets. We will aim, through the next two claims, to get an ISR of $\xee''$ in $G''$ via Theorem \ref{domISR}. When restricted to $G'$, such an ISR would contain a representative from each set in $\xee'$ except possibly those in $\mee$. So we would indeed be able to get an ISR  of $\xee'\setminus \mee$ in the graph $G'$, as we have said we want.

In order to apply Theorem \ref{domISR} (to get an ISR of $\xee''$ in $G''$), we start by letting $S \subset [p']$, with $\see$ the set of $X''_i \in \xee''$ corresponding to $S$. Let $t_s=|\tee\cap \see|$.
We consider the graph $G''_S$ defined with respect to $X''_1, \ldots, X''_{p'}$ (as discussed prior to the statement of Theorem \ref{domISR}).
Note that the set of edges removed from $G''$ to make $G''_S$ are exactly the same as the set of edges removed from $G'$ to make $G'_S$, since each $X''_i$ is obtained from $X'_i$ by adding either an independent set or nothing.
So $G''_S$ is just the disjoint union of $G'_S$ and $m$ copies of $K_{|\see\cap \mee|}$;
in particular note that $G''_S=G'_S$ when $\see\cap \mee=\emptyset$.

\begin{claim}\label{compBound} Let $\comp(G'_S)$ be the number of components of $G'_S$. Then in general,
$\comp(G'_S)\geq \tfrac{1}{4}|V(G'_S)|$. Moreover,  if $\see\cap \mee =\emptyset$, then
$\comp(G'_S)\geq \tfrac{1}{4}\left(|V(G'_S)| + t_s\right).$
\end{claim}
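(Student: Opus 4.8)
The plan is to bound the number of components of $G'_S$ from below by understanding exactly which edges survive in $G'_S$. Recall that $G'_S$ is obtained from $G'[\bigcup_{i\in S} X'_i]$ by deleting, for each $i \in S$, all edges lying inside $X'_i$. So the only edges of $G'_S$ are edges of $G$ that join two different sets $X'_i, X'_j$ with $i,j\in S$; these edges come either from a glued-in $K_4$ (a set $Y \in \yee$) or are left-over cycle-edges of $H$ joining consecutive segments — but the latter cannot happen, because in our splitting of a long cycle every segment $X_i^j$ is separated from the next by a deleted vertex of $X_i^*$, and a triangle contributes no cross-edges at all. Hence \emph{every} edge of $G'_S$ is contributed by some $Y\in\yee$. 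Each $Y$ has only four vertices, so it contributes at most $\binom{4}{2}=6$ edges, and more importantly each $Y$ meets at most four of the sets $X_i'$; in fact the key observation is that each $Y$ can contribute edges spanning at most four of the sets in $S$ and thus can reduce the component count by at most three relative to having no such $Y$. I would make this precise by the standard ``spanning forest'' argument: $\comp(G'_S) = |V(G'_S)| - |E(F)|$ where $F$ is a spanning forest of $G'_S$, and I will bound $|E(F)|$ by charging each forest edge to the set $Y$ that produced it.

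For the first (general) bound, the plan is: for each $Y\in\yee$ that contributes at least one edge to the chosen spanning forest $F$, let $k_Y \le 4$ be the number of sets $X_i'$ ($i \in S$) that $Y$ meets; the edges of $F$ coming from $Y$ form a forest on these $k_Y$ ``super-vertices'' together with whatever vertices of $V(G'_S)$ are involved, but since within each $X_i'$ there are no surviving edges, the contribution of $Y$ to $|E(F)|$ is at most $k_Y - 1 \le 3$. Summing over the $Y$'s used, and noting that the sets $X_i'$ partition $V(G'_S)$ into at least $|V(G'_S)|/4$ blocks (each has size at most $4$), a counting of how many blocks can be ``merged'' gives $|E(F)| \le \tfrac34 |V(G'_S)|$, hence $\comp(G'_S) \ge \tfrac14 |V(G'_S)|$. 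The cleanest way to phrase this: contract each $X_i'$ to a point to get a multigraph $M$ on at least $|V(G'_S)|/4$ vertices; every edge of $M$ comes from a $Y$, so $\comp(M) = \comp(G'_S)$, and a spanning forest of $M$ has at most (number of vertices of $M$) $- \comp(M)$ edges — I just need $\comp(G'_S) = \comp(M) \ge \tfrac14|V(G'_S)|$, equivalently that the number of blocks $X_i'$ in $S$ is at least $\tfrac14|V(G'_S)|$, which is immediate since each block has size $\le 4$. Wait — that already gives it, since distinct blocks in distinct components is false in general; the real content is bounding forest edges. Let me reorganize: $\comp(G'_S) \ge (\#\text{blocks}) - |E(F_M)|$ is wrong too. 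The correct and simplest route is $\comp(G'_S) = \comp(M) = (\#\text{blocks}) - \operatorname{rank}(M) \ge (\#\text{blocks}) - (\#\text{edges of a spanning forest of }M)$, and since each block has $\le 4$ vertices, $\#\text{blocks} \ge \tfrac14 |V(G'_S)|$, while a spanning forest of $M$ has $< \#\text{blocks}$ edges — this is circular. So the honest argument must actually use that edges are \emph{scarce}: I will show directly that $G'_S$ has at least $\tfrac14|V(G'_S)|$ components by exhibiting, in each component, at least four vertices on average; equivalently every component of $G'_S$ contains at least one full block $X_i'$ or else has $\ge 4$ vertices spread over blocks — hmm, a singleton block of size $1$ could be its own component. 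The genuinely correct statement to prove is that each component of $G'_S$ has at least $4$ vertices, OR (for the refined bound) we track triangle-blocks separately.

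Let me restate the plan correctly. The real structure is: consider the bipartite incidence between blocks $\{X_i' : i\in S\}$ and the glued-in $K_4$'s, and observe each component of $G'_S$ is a union of blocks glued via $Y$'s. The first bound $\comp(G'_S)\ge\tfrac14|V(G'_S)|$ follows if I show each component spans total size $\ge 4$ — but a lone vertex (a size-$\le 3$ block untouched by any contributing $Y$, or a single vertex of a cycle-segment) could be a component of size $<4$; however such a block $X_i'$ has $|X_i'|\ge 3$ for triangles and $|X_i'|\ge 4$ for cycle-segments, so the only small blocks are triangles, of size $3$. So I will argue: every component of $G'_S$ not equal to a single triangle-block has $\ge 4$ vertices; a single triangle-block component has $3$. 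Thus $|V(G'_S)| \le 4\comp(G'_S) - (\text{number of lone-triangle components})$, which already gives the general bound and, when $\see\cap\mee=\emptyset$, I upgrade it to $|V(G'_S)| + t_s \le 4\comp(G'_S)$ by showing that under $\see\cap\mee=\emptyset$ \emph{every} triangle-block of $S$, whether or not it is a lone component, can be charged $\tfrac14$ of a ``spare'' slot; here is where maximality (M3) and the degree bound (M2) on $G^{\mee}$ enter — because $\mee$ is a maximal set with $\Delta(G^\mee)\le1$ and $\see\cap\mee=\emptyset$, each triangle in $\see$ must have degree $\ge 2$ in the relevant auxiliary graph restricted to $\mee\cup\see$, forcing enough extra blocks into its component that the average per component drops by $t_s/4$. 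The main obstacle — and the step I expect to occupy most of the write-up — is exactly this refined inequality: making rigorous the charging scheme that converts ``every triangle not in $\mee$ is forced by maximality to be linked to two members of $\mee$'' into the per-component count $|V(G'_S)|+t_s \le 4\comp(G'_S)$, keeping careful track of the case where a triangle-block lies in a component together with cycle-segments of size $4$ versus other triangles, and ensuring no block is double-counted.
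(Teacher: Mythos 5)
There is a genuine gap, and in fact a directional error, in both halves of your argument. For the first bound, the one fact you need is that every component of $G'_S$ has size \emph{at most} four: since the long cycles are cut so that every surviving $H$-edge lies inside a single block $X'_i$, all $H$-edges are deleted in forming $G'_S$, so every surviving edge comes from some glued $Y\in\yee$; and because the $Y$'s are pairwise vertex-disjoint, any path in $G'_S$ stays inside a single $Y$, so each component is a subset of one $Y$ (or an isolated vertex). You correctly observe that every edge of $G'_S$ comes from some $Y$, but you never invoke the disjointness of the $Y$'s to cap component size, and instead you drift into a contraction/spanning-forest argument that you yourself recognize as circular, then pivot to the claim that ``every component has $\ge 4$ vertices.'' That claim is false (an isolated vertex of a cycle-segment is a component) and, more importantly, it points the inequality the wrong way: components of size at least four would give $\comp(G'_S)\le\tfrac14|V(G'_S)|$, the reverse of what is wanted.

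For the refined bound the same reversal persists. The correct mechanism is not that triangles of $\see$ are ``forced to have extra blocks in their component,'' but that they are forced into \emph{strictly smaller} components: by (M2)--(M4) (note that (M4) is essential here --- with only (M2) and (M3) a triangle $T\in\see\cap\tee$ could have exactly one edge to a $\tilde T\in\mee$ that already has degree one in $G^{\mee}$, and you need the swap $\mee\setminus\{\tilde T\}\cup\{T\}$ together with edge-minimality to rule this out), each $T\in\see\cap\tee$ sends at least two edges of $G^{\xee}$ into $\mee$. The $Y$'s witnessing these edges contain vertices of $\mee$-triangles, and since $\see\cap\mee=\emptyset$ those vertices are \emph{absent} from $V(G'_S)$; hence the components of $G'_S$ meeting $T$ through these $Y$'s have size at most $2$ or $3$ rather than $4$, and summing the resulting deficits over $\see\cap\tee$ yields the extra $\tfrac14 t_s$. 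Your proposal names the right hypotheses but describes the opposite effect and explicitly defers the accounting, so the key step of the claim is not actually carried out.
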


\begin{proofc} Since all edges of $H$ are removed when forming $G'_S$, each component of $G'_S$ has size at most four, and we immediately get the $\tfrac{1}{4}|V(G'_S)|$ bound. Now suppose that $\see\cap \mee =\emptyset$.

Let $T\in \tee\cap\see$. Then $T\not\in \mee$, by assumption. We claim that in $G^{\xee}$ (note that this graph is formed prior to any deletions or breaking of long cycles), there are at least two triangles in $\mee$ that are each in some common glued $K_4$ with $T$.
Certainly we cannot have no edges, since then  $\tilde{\mee}= \mee\cup \{T\}$ has $|\tilde{\mee}| > |\mee|$, violating (M3).
So suppose for a contradiction that there is exactly one such edge in $G^{\xee}$, say from $T$ to $\tilde{T}\in \mee$. We know that $\tilde{T}$ has at most one edge in $G^{\xee}$ joining it to other triangles in $\mee$. If it actually has no such edges, then again $\tilde{\mee}= \mee\cup \{T\}$ satisfies (M1) and (M2) but $|\tilde{\mee}| > |\mee|$ violating (M3).
So $\tilde{T}$ must have exactly one edge in $G^{\xee}$ joining it to other triangles in $\mee$; now  $\tilde{\mee}= (\mee \setminus
\{\tilde{T}\}) \cup \{T\}$ satisfies (M1), (M2), (M3), but  $|E(G^{\tilde{\mee}})|<|E(G^{\mee})|$, violating (M4). So indeed, there are at least two edges from $T$ to $\mee$ in $G^{\xee}$. Let $E_T$ be such a pair of edges.

Either $E_T$ comes from one $Y\in \yee$ containing vertices from both $T$ and two different triangles in $\mee$, or it comes from two different $Y^1, Y^2\in \yee$, both of which contain a vertex from $T$ and a triangle in $\mee$. In all cases, by the definition of $G^{\xee}$, each of $Y, Y^1, Y^2$ must contain at least one vertex from a cycle of $H$ of size at least four; this cycle may or may not be in $\see$. Partition $\see\cap \tee$ into $\see_1, \see_2$ so that for a given $T\in \see\cap \tee$, $T\in \see_1$ if $E_T$ comes from one $Y\in \yee$, and $T\in \see_2$ if $E_T$ comes from two $Y^1, Y^2\in \yee$.

If $T\in \see_1$, the vertex in $T\cap Y$ is in a component of size at most 2 in $G_S$ (since $\mee\cap \see =\emptyset$).  If $T\in \see_2$,
the two vertices in $T\cap Y', T\cap Y''$  are both in components of size at most 3 in $G_S$ (since $\mee\cap \see =\emptyset$). However it could be that the third vertex in such a component (other than the vertex in $T$ and the vertex in the longer cycle) is another triangle in $\see_2$. Overall, for every triangle in $\see_1$ we may count one component of $G_S$ of size at most three (in fact size at most two), and for every triangle in $\see_2$ we may count one component of size at most three. In fact, all these vertices in triangles still exist in $G'_S$ (since we only delete vertices from long cycles), so the previous sentence remains true if we replace $G_S$ with $G'_S$. Since we already know that all components in $G'_S$ have size at most $4$, we get:
\begin{eqnarray*}
\comp(G'_S) &\geq& \tfrac{1}{4}(|V(G'_S)|-2|\see_1|-3|S_2|)+|S_1|+|S_2|\\
 &=& \tfrac{1}{4}|V(G'_S)|+\tfrac{1}{2}|S_1|+\tfrac{1}{4}|S_2|\\
 &=& \tfrac{1}{4}|V(G'_S)|+\tfrac{1}{2}|S_1|+\tfrac{1}{4}|S_2|\\
 &\geq& \tfrac{1}{4}|V(G'_S)|+\tfrac{1}{4}(|S_1|+|S_2|)=\tfrac{1}{4}(|V(G'_S)|+t_s).
\end{eqnarray*}
\end{proofc}

We will now use Claim \ref{compBound} as part of our proof of the following.

\begin{claim}\label{totdomGood} $\totdom(G''_S)\geq 2|S|$.
\end{claim}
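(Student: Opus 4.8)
The plan is to bound $\totdom(G''_S)$ below by relating total domination to the component structure we already control via Claim \ref{compBound}. First I would recall that for any graph without isolated vertices, every component contributes at least two vertices to any total dominating set (indeed, a total dominating set restricted to a single component $C$ must be a total dominating set of $C$, which has size $\geq 2$). Since $G''_S$ is a disjoint union of $G'_S$ with $m$ copies of $K_{|\see\cap\mee|}$, and each $X''_i$ has size at least $4$ so no vertex of $G''_S$ is isolated, we get
\[
\totdom(G''_S) \;=\; \totdom(G'_S) \;+\; m\cdot\totdom(K_{|\see\cap\mee|}) \;\geq\; 2\,\comp(G'_S) \;+\; 2m\cdot[\,\see\cap\mee\neq\emptyset\,],
\]
where the last term is present only when $\see\cap\mee\neq\emptyset$ (when $\see\cap\mee=\emptyset$ the copies of $K_0$ are empty and $G''_S=G'_S$).

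Next I would split into the two cases matching Claim \ref{compBound}. If $\see\cap\mee=\emptyset$, then $G''_S=G'_S$ and Claim \ref{compBound} gives $\comp(G'_S)\geq \tfrac14(|V(G'_S)|+t_s)$, so $\totdom(G''_S)\geq \tfrac12(|V(G'_S)|+t_s)$. I must then check $\tfrac12(|V(G'_S)|+t_s)\geq 2|S|$; writing $|S|=t_s + (|S|-t_s)$, and noting that each of the $|S|-t_s$ non-triangle sets $X''_i$ in $\see$ has size exactly $4$ (these are the breaking pieces $X_i^j$ of length $4$, the tail piece of length between $4$ and $8$, or an original cycle/path part of size $\geq 4$ — here I should double check that every non-triangle set in $\xee''$ has size $\geq 4$, using $|X_i^j|=4$, $4\le|X_i^{p_i}|\le 8$, and $H$ being $2$-regular so each original cycle has length $\geq 3$ with the $\geq 15$ ones already subdivided) while each triangle set has size exactly $3$, we get $|V(G'_S)|\geq 4(|S|-t_s)+3t_s = 4|S|-t_s$, hence $\tfrac12(|V(G'_S)|+t_s)\geq \tfrac12(4|S|-t_s+t_s)=2|S|$, as desired.

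If instead $\see\cap\mee\neq\emptyset$, then using the general bound $\comp(G'_S)\geq \tfrac14|V(G'_S)|$ from Claim \ref{compBound} together with the extra $2m$ from the $K_m$-copies, I would get $\totdom(G''_S)\geq \tfrac12|V(G'_S)| + 2m$. Now $|V(G'_S)|\geq 4|S|-t_s$ as above, and $t_s = |\tee\cap\see| \geq |\mee\cap\see|$; the point is that $m=|\mee|\geq |\mee\cap\see|$, and in fact since $\see\cap\mee\neq\emptyset$ we have $m\geq 1$, but more usefully $2m\geq 2|\mee\cap\see| \geq \ldots$; I need $\tfrac12(4|S|-t_s)+2m\geq 2|S|$, i.e. $2m\geq \tfrac12 t_s$. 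The main obstacle is precisely verifying this last inequality: I would argue $t_s\leq |S|$ trivially, but that is not enough on its own, so I should instead bound $t_s$ using the structure — each triangle counted in $t_s$ either lies in $\mee$ (contributing to $|\mee\cap\see|\leq m$) or, if it is in $\tee\setminus\mee$, it is path-linked/$K_4$-linked to triangles of $\mee$ as established inside the proof of Claim \ref{compBound}. This "charging" of $\tee\cap\see$-triangles to $\mee$-triangles, combined with the factor $m$ sitting fully in front of the $K_m$ term regardless of how small $\see\cap\mee$ is, should close the gap; getting the constants to line up here is where I expect to spend the most care.
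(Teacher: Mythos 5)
Your overall architecture matches the paper's: split on whether $\see\cap\mee$ is empty, convert the component counts of Claim \ref{compBound} into total domination via ``at least $2$ per component,'' and compare against $|V(G'_S)|\geq 4|S|-(\text{number of triangle parts in }\see)$. Your first case ($\see\cap\mee=\emptyset$) is complete and is exactly the paper's argument. But your second case has a genuine gap, and you say so yourself: you reduce everything to the inequality $2m\geq \tfrac12 t_s$ and then do not prove it, instead sketching a ``charging'' of triangles in $\tee\setminus\mee$ to triangles of $\mee$ via the path-linking structure. That charging argument is both unnecessary and not actually set up to work (the linking facts established inside Claim \ref{compBound} bound component sizes, not $|\mee|$ against $t_s$). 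The missing ingredient is simply Claim \ref{Msize}, which was proved earlier precisely for this moment: $m=|\mee|\geq \tfrac{t}{4}\geq \tfrac{t_s}{4}$ since $t_s=|\tee\cap\see|\leq|\tee|=t$. Hence $2m\geq \tfrac{t}{2}\geq \tfrac{t_s}{2}$ and the case closes. (The paper phrases this with the total $t$ rather than $t_s$, using the slightly weaker bound $|V(G'_S)|\geq 4|S|-t$ together with $2m\geq \tfrac{t}{2}$; either bookkeeping works once Claim \ref{Msize} is invoked.)

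Two smaller points. First, the paper treats $|\see\cap\mee|=1$ as a separate trivial case: each copy of $K_m$ then contributes a single vertex to $G''_S$, which is isolated, so $\totdom(G''_S)=\infty$ by convention. Your justification that ``each $X''_i$ has size at least $4$ so no vertex of $G''_S$ is isolated'' is not correct as stated, but the error is harmless since isolated vertices only make $\totdom$ infinite and the claim trivial; still, you should not write $\totdom(K_1)=2$. Second, your verification that every non-triangle part of $\xee'$ has size at least $4$ is correct and is the same observation the paper makes.
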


\begin{proofc} We proceed by cases according to $|\see\cap \mee|$. If this value is one, then $G''_S$ contains isolates, and the result follows trivially.

Suppose first that $|\see\cap \mee| \geq 2.$ Then
$\totdom(K_{|\see\cap \mee|})=2$, and using the result of Claim \ref{Msize}, we get
\begin{equation}\label{totdom2}
\totdom(G''_S) = \totdom(G'_S) + m\cdot\totdom(K_{|\see\cap \mee|})=\totdom(G'_S)+ 2m \geq \totdom(G'_S)+ \tfrac{t}{2}
\end{equation}
Since every component of $G'_S$ has total domination number at least 2, by the first bound in Claim \ref{compBound}, we get
\begin{equation}\label{V2}\totdom(G'_S) \geq 2(|V(G'_S)|/4)=\tfrac{1}{2}|V(G'_S)|.
\end{equation}
On the other hand, all $X'_i \in \xee'$ have at least three vertices, with the only parts of size three corresponding to triangles in $H$ (recall that our broken cycle pieces have at least four vertices). It is possible that all $t$ of the triangle parts are included in $\see$, but even still,
\begin{equation}\label{V4}
|V(G'_S)| \geq 4|S| - t.
\end{equation}
Combining (\ref{totdom2}), (\ref{V2}), and (\ref{V4}), we get
$$\totdom(G''_S) \geq \tfrac{1}{2}(4|S|-t)+ \tfrac{t}{2}=2|S|.$$

We may now assume that $\see\cap \mee =\emptyset.$ Then
$\totdom(G''_S)=\totdom(G'_S)$. Since every component of $G'_S$ has total domination number at least 2, and by the second bound in Claim \ref{compBound}, we get
\begin{equation}\label{V0}\totdom(G'_S) \geq 2(\tfrac{1}{4}(|V(G'_S)|+t_s))=\tfrac{1}{2}|V(G'_S)|+ \tfrac{t_s}{2}.
\end{equation}
We know that exactly $t_s$  triangle parts are included in $\see$, so we get
\begin{equation}\label{V40}
|V(G'_S)| \geq 4|S| - t_s.
\end{equation}
Combining (\ref{V0}) and (\ref{V40}), we get
$$\totdom(G''_S)=\totdom(G'_S)\geq \tfrac{1}{2}(4|S|-t_s)+ \tfrac{t_s}{2}=2|S|.$$
\end{proofc}

By Claim \ref{totdomGood}, we know that $\totdom(G''_S) \geq 2|S|-1$, so we may finally apply Theorem \ref{domISR} to get an ISR of $\xee''$ in $G''$. As previously discussed, this in turn allows us to get an ISR of $\xee'\setminus \mee$ in $G'$, which we have said was our goal for this section of the proof. Let $\tilde{\xee}=\xee'\setminus \mee$, and call this last ISR $R_{\tilde{\xee}}$. In fact, $R_{\tilde{\xee}}$ is also an ISR of $\tilde{\xee}$ in $G$, since no edges are added between vertices of $R_{\tilde{\xee}}$ when moving back to $G$ from $G'$.

Since $\Delta(H)\leq 2$ and $|Y_i|\geq 4$ for all $i$, Theorem \ref{ISR4} guarantees that $\yee$ has an ISR in $H$, say $R_{\yee}$.

Since $R_{\yee}$ is an ISR of $\yee$ in $G$, and the $R_{\tilde{\xee}}$ is an ISR of $\tilde{\xee}$ in $G$, we can now apply Theorem \ref{thm:combine-isr-gen}. The result is a set $R$ that is independent in $G$, that is an ISR of $\yee$, and that hits  $\bigcup_{v \in V(C)} X_v$ for each component $C$ of the graph $\G_{\tilde{\xee}\yee}$.
Since $R$ hits every set in $\yee$, $G-R$ is obtained from some graph $H'\subseteq H$ by gluing on triangles.

\begin{claim}\label{Hprime}
$H'$ is a disjoint union of triangles and paths of length at most 12.
\end{claim}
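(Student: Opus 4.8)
The plan is to work directly with $H' = H \setminus R$. Since $H$ is $2$-regular, $H'$ is a disjoint union of paths and cycles, so it suffices to show that every cycle of $H'$ is a triangle and that every path-component of $H'$ has length at most $12$. A cycle of $H'$ is exactly a cycle $C$ of $H$ with $R \cap V(C) = \emptyset$; a triangle component of $H$ is harmless whether or not $R$ meets it (it survives as a triangle, or is cut into a path on at most two vertices), so the task reduces to controlling the cycles of $H$ of length at least $4$. The key preliminary step is to classify the $E_{\tilde{\xee}\yee}$-edges: I claim each such edge has the form $v_{X_i^j}\,w$, where $X_i^j$ is a broken piece of some long cycle, $v_{X_i^j}$ is an end-vertex of that piece, and $w \in X_i^* \cap R_{\yee}$ is the separator vertex adjacent to $v_{X_i^j}$ on that cycle. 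Indeed, an $E_{\tilde{\xee}\yee}$-edge cannot be a $K_4$-edge (that would be a $\yee$-edge), so it lies in $H$ and joins two adjacent vertices of a common cycle $C$ of $H$. If $C$ is a triangle or a short cycle, then $V(C)$ is one of the sets in $\xee'$: either $V(C) \in \tilde{\xee}$, making the edge an $\tilde{\xee}$-edge, or $V(C) \in \mee$, in which case both endpoints (lying in no set of $\tilde{\xee}$) must be in $R_{\yee}$, contradicting that $R_{\yee}$ is independent in $H$. If instead $C$ is a long cycle, the two endpoints cannot both lie in one piece $X_i^j$ (that would be an $\tilde{\xee}$-edge), so one endpoint is a separator vertex; being absent from $G'$, it cannot be in $R_{\tilde{\xee}}$, hence lies in $R_{\yee}$; its neighbour, lying in the adjacent piece, cannot also be in $R_{\yee}$ (independence in $H$ again), so it is that piece's representative, necessarily an end-vertex.

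Two consequences follow through Theorem~\ref{thm:combine-isr-gen}. First, if $C$ is a cycle of $H$ with $4 \le |V(C)| \le 14$, its representative in $R_{\tilde{\xee}}$ is incident to no $E_{\tilde{\xee}\yee}$-edge (it is neither a piece-representative of a long cycle nor a separator vertex), so part~(a) gives $R \cap V(C) \ne \emptyset$; deleting at least one vertex from a cycle on at most $14$ vertices leaves paths, the longest having at most $13$ vertices and hence length at most $12$. Second, for a piece $X_i^j$ of a long cycle, either $v_{X_i^j}$ is incident to no $E_{\tilde{\xee}\yee}$-edge and part~(a) gives $R \cap X_i^j \ne \emptyset$, or $v_{X_i^j}$ is incident to such an edge and part~(b) gives that $R$ hits $X_i^j \cup \{w\}$ for some $E_{\tilde{\xee}\yee}$-edge $wv_{X_i^j}$, which by the classification forces $w$ to be a separator vertex adjacent to $X_i^j$. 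So for \emph{every} piece $X_i^j$, either $R$ meets $X_i^j$ or $R$ contains an adjacent separator vertex; in particular $R$ meets $V(C)$ for every long cycle $C$, so $C$ contributes only paths to $H'$.

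It remains to bound the length of a path $P$ arising from a long cycle after deleting $R$. Recall that the pieces of such a cycle are blocks of four consecutive non-separator vertices, except that the last block $X_i^{p_i}$ may have up to eight, with consecutive blocks separated by single separator vertices. The crucial point is that if $P$ contains a whole block $X_i^j$ then $X_i^j$ is not met by $R$, so by the previous paragraph an adjacent separator vertex lies in $R$, cutting $P$ off on the corresponding side. I would then argue: $P$ cannot contain three consecutive whole blocks, since the middle one would be unmet by $R$ while both of its adjacent separators lie in $P$ and hence not in $R$; if $P$ contains exactly two whole blocks then the separators immediately outside them both lie in $R$, so $P$ is precisely block $+$ separator $+$ block, on at most $4 + 1 + 8 = 13$ vertices; if $P$ contains exactly one whole block, one side is cut off at an adjacent separator and the other extends into a block that must be internally hit by $R$, and a short count with proper-prefix sizes gives at most $12$ vertices; and if $P$ contains no whole block it spans at most two blocks partially, on at most $11$ vertices. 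In every case $P$ has length at most $12$, which together with the short-cycle case and the harmless triangle case completes the proof.

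The step I expect to be the main obstacle is this final length count, precisely because the last block $X_i^{p_i}$ can have eight vertices, so the bound is tight at $12$ and one must track which block plays the role of the long block in each configuration — in particular, that the two whole blocks inside $P$ cannot both be the long one. The $E_{\tilde{\xee}\yee}$-edge classification is the other delicate point, chiefly in disposing of the $\mee$-triangles, where everything rests on the independence of $R_{\yee}$ in $H$.
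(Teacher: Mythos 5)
Your proposal is correct and follows essentially the same route as the paper: the classification of $E_{\tilde{\xee}\yee}$-edges as separator--to--piece-endpoint edges of $H$, the application of parts (a) and (b) of Theorem~\ref{thm:combine-isr-gen} to force $R$ to hit every short cycle and every block-or-adjacent-separator, and the arithmetic $8+1+4$ governing the big block all match the paper's argument. The only difference is cosmetic: where you enumerate surviving paths by the number of whole blocks they contain (correctly identifying the tight case block $+$ separator $+$ block on $4+1+8=13$ vertices), the paper argues by contradiction that any $14$ consecutive surviving vertices would have to contain a full separator--block--separator segment, which Theorem~\ref{thm:combine-isr-gen} forbids.
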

\begin{proofc}
Consider a cycle in $H$ of length at most 14 not belonging to $\mee$, say represented by $X\in \xee$. Such a cycle is unaltered when moving from $G, \xee$ to $G', \xee'$ so $X\in \xee'$ and since $X \not \in \mee$, $X\in \tilde{\xee}$. Suppose $v$ is the representative of $X$ in $R_{\tilde{\xee}}$. Consider $\G_{\tilde{\xee}\yee}$. Since $v$ is the representative of $X$ in $\tilde{\xee}$, $X_v = X$. Now any vertex $w \in R_\yee$ adjacent to $v$ in $G$ either belongs to $X$ (implying that $vw$ is an $\tilde{\xee}$-edge) or both $w, v \in Y$ for some $Y \in \yee$ (implying that $vw$ is an $\yee$-edge). Therefore, by definition, there is no $E_{\tilde{\xee}\yee}$ edge incident to $v$. So by Theorem \ref{thm:combine-isr-gen}, $R$ hits $X$. This means that in $G-R$, all that remains of the cycle represented by $X$ in $H$ (i.e. its contribution to $H'$) is a subgraph of a path with at most 13  vertices (i.e. a path of length at most 12).

Consider now a cycle $(x_1, x_2, \ldots, x_{\ell_i})$ in $H$ where $\ell_i \ge  15$, say represented by  $X_i\in \xee$.   Suppose, for a contradiction, that that there exists a path segment $P$ of $\ell\geq 14$ vertices (so with length at least 13) which is a subgraph of this cycle and such that $V(P)\cap R = \emptyset$; without loss of generality suppose that $P=(x_1, \ldots, x_{\ell})$

 We know that $|X_i^j| = 4$ for $1 \le j \le p_i-1$ and $4 \le |X_i^{p_i}| \le 8$. Moreover, any two $X_i^j$s are separated (along the path on cycle $X_i$) by exactly one vertex and that belongs to $X_i^*$. Now since $\ell > 8+1+4=13$, $P$ contains a path segment $a X_i^{j}b$ such that $a, b \in X_i^*$ and  $X_i^j \in \xee'$. We know that $R_{\tilde{\xee}}$ uniquely hits $X_i^j$. Let $w_{j}$ be this representative for $X_i^{j}$, that is, $w_{j} = X_i^{j} \cap R_{\tilde{\xee}}$.

If there is no $E_{\tilde{\xee}\yee}$-edge incident to $w_{j}$, then by Theorem \ref{thm:combine-isr-gen} $R$ hits $X_i^j$, contradicting our assumption that $V(P) \cap R = \emptyset$. Therefore, there must be at least one $E_{\tilde{\xee}\yee}$-edge incident to $w_{j}$; let $v$ be a second endpoint of such an edge. Note that $v \in \{a,b\}$ because if not then $vw_{j}$ is either an $\tilde{\xee}$-edge or a $\yee$-edge. So, either $v = a$ and $w_{j}$ is the lowest-indexed member of $X_i^{j}$, or $v = b$ and $w_{j}$ is the highest-indexed member of $X_i^{j}$. By symmetry, we can assume that $v = a$. There cannot be any other $E_{\tilde{\xee}\yee}$-edges incident to $w_{j}$, since $|X_i^{j}|\geq 4$ means that $w_{j}$ is certainly not followed by $b$ on $P$. Then by Theorem \ref{thm:combine-isr-gen}, $R$ hits $X_i^{j} \cup \{a\} \subseteq V(P)$, contradiction.
\end{proofc}

Consider now the graph $H'$ in the context of Claim \ref{Hprime}. Note that every triangle in $H'$ is from $\mee$. By condition (M2), we know that $\Delta(G^{\mee})\leq 1$. In $H'$ this means that if $T_1$ is a triangle in $H'$, then there is at most one other triangle $T_2$ in $H'$ such that they are both joined in $G-R$ by some added $K_3$, say $Y'$, whose third vertex is from some cycle of length at least four in $H$.

In $G-R$, a pair of triangles in $H'$ are path-linked iff there is some added $K_3$ containing vertices from both triangles as well as from a path in $H'$ of length at least two.  Since paths in $H'$ of length at least two were all part of a cycle of length at least four in $H$ (paths in $H'$ coming from triangles in $H$ have length one), the above paragraph tells us that in $G-R$ every $H'$-triangle is path-linked to at most one other $H'$-triangle. So the truth of Conjecture \ref{Conj3col} would give a 3-colouring of $G-R$, and hence a 4-colouring of $G$.
\end{proof}

\bibliographystyle{amsplain}
\bibliography{bib}

\end{document}